\newtheorem{thm}{Theorem}[section]
\newtheorem{lem}[thm]{Lemma}
\newtheorem{prop}[thm]{Proposition}
\newtheorem{conj}[thm]{Conjecture}
\newtheorem*{thmA}{Theorem A}
\newtheorem*{thmB}{Theorem B}
\newtheorem*{WConj}{Weak Conjecture}
\newtheorem*{SConj}{Strong Conjecture}
\theoremstyle{definition}
\newtheorem{claim}[thm]{Claim}
\newtheorem{rmk}[thm]{Remark}
\newcommand{\ga}[2]{\begin{gather}\label{#1}#2 \end{gather}}
\newcommand{\Hom}{{\rm Hom}}
\newcommand{\Spec}{{\rm Spec \,}}
\newcommand{\sA}{{\mathcal A}}
\newcommand{\sC}{{\mathcal C}}
\newcommand{\sE}{{\mathcal E}}
\newcommand{\sF}{{\mathcal F}}
\newcommand{\sL}{{\mathcal L}}
\newcommand{\sO}{{\mathcal O}}
\newcommand{\sS}{{\mathcal S}}
\newcommand{\A}{{\mathbb A}}
\newcommand{\C}{{\mathbb C}}
\newcommand{\F}{{\mathbb F}}
\renewcommand{\P}{{\mathbb P}}
\newcommand{\Q}{{\mathbb Q}}
\newcommand{\Z}{{\mathbb Z}}
\DeclareMathOperator{\Ind}{Ind}
\DeclareMathOperator{\Ch}{char}
\DeclareMathOperator{\Spm}{Spm}
\DeclareMathOperator{\Spf}{Spf}
\DeclareMathOperator{\GL}{GL}
\DeclareMathOperator{\PD}{PD}
\DeclareMathOperator{\Def}{D}
\title{Density of arithmetic representations of function fields}
\author{H\'el\`ene Esnault}
\address{Freie Universit\"at Berlin, Arnimallee 3, 14195, Berlin,  Germany; and \\
The Institute for Advanced Study, Mathematics, 1 Einstein Dr., Princeton, NJ 08540, USA}
\email{esnault@math.fu-berlin.de and esnault@ias.edu}
\author{Moritz Kerz}
\address{Fakult\"at f\"ur Mathematik,
Universit\"at Regensburg,
93040 Regensburg, Germany}
\email{moritz.kerz@mathematik.uni-regensburg.de}
\begin{document}



\maketitle

\begin{prelims}

\DisplayAbstractInEnglish

\bigskip

\DisplayKeyWords

\medskip

\DisplayMSCclass







\end{prelims}


\newpage

\setcounter{tocdepth}{1}

\tableofcontents


\section{Introduction}

Let $X_0$ be a smooth geometrically connected variety   defined over a finite field $k=\F_q$
of characteristic $p$. We fix  an algebraic closure $k\subset \bar k$ and a
geometric point $x\in X_0(\bar k) $. In this note we  study representations of the geometric \'etale fundamental group
$G=\pi_1^{\rm \acute{e}t}(X,x)$, where $X=X_0\otimes_k \bar k$, and the action of the
Frobenius on the set of  representations.

For a given prime $\ell\neq p$, we fix a finite field $\F$ of characteristic $\ell$,
and a continuous semi-simple representation $\bar \rho: G\to {\GL}_r(\F)$.
 We define the set  $ \mathcal S_{\bar \rho}$
  of isomorphism classes of continuous semi-simple representations $ \rho\colon \pi_1^{\rm
    \acute{e}t}(X, x)\to {\GL}_r(\bar \Q_\ell)$ with the property that the associated semi-simple residual
  representation is isomorphic to $\bar \rho$.
  We endow $ \mathcal S_{\bar \rho}$ with a Noetherian Zariski topology in Section~\ref{sec:zartop}.

There is a canonical  Frobenius action $\Phi\colon \sS_{\bar \rho}\xrightarrow{\sim}
\sS_{\bar \rho}$.
 A point  $[\rho]\in \sS_{\bar \rho}$ is fixed by $\Phi^n$ for some  integer $n>0$ if and
 only if the representation $\rho$ extends to a continuous representation $\pi_1^{\rm
   \acute{e}t}(X_0\otimes_k k' ,x)\to \GL_r(\bar\Q_\ell )$, for some finite extension
 $k\subset k'$. We call such a point in $\sS_{\bar\rho}$
 arithmetic and we let $\sA_{\bar\rho}\subset\sS_{\bar\rho}$ be the subset of arithmetic points.

 The aim of our note is to propose and to study (two variants of) a conjecture about the
 density of arithmetic points, see Section~\ref{sec:conj}.

\begin{WConj} The arithmetic points $\sA_{\bar\rho}$ are dense in $\sS_{\bar \rho}$.
\end{WConj}

\begin{SConj}
 For a Zariski closed subset $Z\subset \sS_{\bar \rho}$ with $\Phi^n(Z)=Z$ for some
 integer $n>0$
the subset of arithmetic points $Z\cap \sA_{\bar\rho} $ is dense in $Z$.
\end{SConj}

One application of the Strong Conjecture is that it implies a Hard Lefschetz isomorphism
for semi-simple perverse $\bar\Q_\ell$-sheaves in characteristic~$p$, see
Section~\ref{sec:applications}. This application is motivated by the corresponding work of
Drinfeld for complex varieties~\cite{Dri}.

For degree $r=1$ and $X$ either proper or a torus the Strong Conjecture is shown in~\cite[Theorem~1.7 and Lemma~3.1]{EK19}.

\smallskip

 In Section~\ref{sec:resind} we prove the following reductions for the Strong Conjecture,
 see also
 Proposition~\ref{prop:lefbelyi}. Here the algebraically closed field $\bar k$ is fixed.

 \begin{itemize}
 \item
   If the Strong Conjecture holds  for given degree~$r$  for all smooth curves $X$ over
   $\bar k$ then
   it holds in degree~$r$  for all smooth varieties $X$ over $\bar k$.
   \item If the
Strong Conjecture holds
in any degree $r$ for   $X=\P^1_{\bar k}\setminus \{0,1,\infty\}$ and $\bar \rho$  tame then it
holds in general over $\bar k$.
\end{itemize}

These reductions motivate our two main theorems, see Section~\ref{sec:conj}.

\begin{thmA}  The
 Weak Conjecture holds  when $X$ is a curve, $\ell > 2$ and $\bar \rho$ is absolutely irreducible.
\end{thmA}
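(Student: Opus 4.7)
The plan is to reduce the Weak Conjecture to de~Jong's conjecture for curves, proved by Gaitsgory for $\ell>2$, combined with standard obstruction-theoretic lifting.

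Since $\bar\rho$ is absolutely irreducible, Mazur's theorem provides a universal deformation ring $R=R_{\bar\rho}$, a complete Noetherian local $W(\mathbb{F})$-algebra whose tangent space is $H^1(X,\mathrm{ad}(\bar\rho))$; the set $\sS_{\bar\rho}$ identifies with the closed points of the rigid generic fiber $\mathrm{Spf}(R)^{\mathrm{rig}}$, equipped with the Noetherian Zariski topology of Section~\ref{sec:zartop}. The Frobenius induces a $W(\mathbb{F})$-algebra automorphism $\Phi\colon R\to R$, and a closed point of the generic fiber lies in $\sA_{\bar\rho}$ precisely when it is periodic under $\Phi$. My goal is to show that such periodic points are Zariski-dense.

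Given $\rho\in\sS_{\bar\rho}$ with values in $\mathrm{GL}_r(\sO_E)$ for some finite extension $E/\Q_\ell$ with uniformiser $\pi_E$, I would apply Gaitsgory's theorem to each Artinian reduction $\rho\bmod \pi_E^n$: for every $n\geq 1$ there should exist a finite extension $k_n'/k$ and a representation $\tilde\rho_n\colon \pi_1^{\mathrm{\acute{e}t}}(X_0\otimes_k k_n',x) \to \mathrm{GL}_r(\sO_E/\pi_E^n)$ whose restriction to $\pi_1(X)$ equals $\rho\bmod \pi_E^n$. The hypothesis $\ell>2$ enters precisely here, in the applicability of Gaitsgory's proof. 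Then, using absolute irreducibility of $\bar\rho$ together with the curve hypothesis, the obstructions in $H^2(\pi_1(X_0\otimes k_n'),\mathrm{ad}(\tilde\rho_n))$ to lifting $\tilde\rho_n$ step by step to characteristic zero can be controlled, possibly at the cost of a further enlargement of $k_n'$, yielding characteristic-zero arithmetic representations $\rho_n\in\sA_{\bar\rho}$ with $\rho_n\equiv\rho\pmod{\pi_E^n}$.

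The main obstacle is the final passage from $\pi_E$-adic approximation to density in the Noetherian Zariski topology of $\sS_{\bar\rho}$, since $a~priori$ $\pi_E$-adic limits need not respect this topology. I expect this to be handled by exploiting the compatibility between the Zariski topology on $\mathrm{Spf}(R)^{\mathrm{rig}}$ and the $\pi_E$-adic topology: given a Zariski-closed $Z\subsetneq\sS_{\bar\rho}$ cut out by a finitely generated ideal $\mathfrak{a}\subset R$, the condition $\rho\notin Z$ is already detected at some finite level $R/\pi_E^n$, so the arithmetic point $\rho_n$ produced above automatically lies outside $Z$ for $n$ sufficiently large. If this last compatibility fails, I would fall back on working directly with the formal scheme $\mathrm{Spf}(R)$ and invoking the explicit description of the Noetherian topology in Section~\ref{sec:zartop} to convert $\pi_E$-adic density into Zariski density.
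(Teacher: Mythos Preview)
Your proposal has a genuine gap, and the role of de~Jong's conjecture is misplaced.

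In step~2 you do not need Gaitsgory at all: since $\sO_E/\pi_E^n$ is a finite ring, the reduction $\rho\bmod\pi_E^n$ has finite image and factors through a finite quotient of $G$ on which some power of $\Phi$ acts trivially, so the arithmetic extension $\tilde\rho_n$ exists for elementary reasons. The real content is step~3, and here the argument breaks. The obstruction to lifting the arithmetic representation $\tilde\rho_n$ one step at a time lies in $H^2\bigl(\pi_1^{\rm \acute{e}t}(X_0\otimes_k k'_n,x),\Ad_{\bar\rho}\bigr)$, which by Hochschild--Serre contains the Frobenius coinvariants $H^1(G,\Ad_{\bar\rho})_{\Phi}$ as a subspace; these do not vanish in general and are not killed by replacing $k'_n$ by a finite extension. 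More decisively, your conclusion---for every $n$ an arithmetic $\rho_n\in\sA_{\bar\rho}$ with $\rho_n\equiv\rho\pmod{\pi_E^n}$---would exhibit every $\rho\in\sS_{\bar\rho}$ as an $\ell$-adic limit of arithmetic points. But $\sA_{\bar\rho}$ is discrete and \emph{closed} in the $\ell$-adic topology by~\cite[Theorem~1.1.3]{Lit19}, so this would force $\sS_{\bar\rho}=\sA_{\bar\rho}$, which is false whenever $R_{\bar\rho}$ has positive relative dimension. No argument producing such a sequence $(\rho_n)$ can therefore be correct, and the fallback you sketch in step~4 does not rescue it.

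The paper's proof is structural rather than pointwise. One first shows that $R_{\bar\rho}\cong\sO[[t_1,\ldots,t_b]]$ is formally smooth, using geometric obstruction theory on the curve. De~Jong's conjecture, via Gaitsgory, then enters to prove that the coinvariant ring $(R_{\bar\rho})_{\Phi^n}=R_{\bar\rho}/(\Phi^n(t_i)-t_i)_i$ is finite and flat over $\sO$; equivalently, that the elements $\Phi^n(t_i)-t_i$ form a regular sequence. A second ingredient you omit entirely is the Langlands correspondence of Lafforgue: via purity of $H^1(X,\sE nd(\sF))$ it shows that $(R_{\bar\rho})_{\Phi^n}\otimes_\sO\bar\Q_\ell$ is reduced. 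Zariski density then follows by a direct ring-theoretic argument: an element $\alpha\in R_{\bar\rho}\otimes_\sO\bar\Q_\ell$ vanishing on $\sA_{\bar\rho}$ lies in the nilradical of each $(R_{\bar\rho})_{\Phi^n}\otimes_\sO\bar\Q_\ell$, hence is zero there by reducedness, hence (by flatness) already zero in $(R_{\bar\rho})_{\Phi^n}$; one concludes since $\bigcap_{n>0}\ker\bigl(R_{\bar\rho}\to(R_{\bar\rho})_{\Phi^n}\bigr)=0$.
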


\begin{thmB} The
Strong Conjecture holds for $X=\P^1_{\bar k}\setminus \{0,1,\infty\}$ when $\bar \rho$ is tame of degree two.
\end{thmB}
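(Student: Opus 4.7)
The plan is to reduce Strong density for tame degree two on $X = \P^1_{\bar k}\setminus\{0,1,\infty\}$ to the rank-one case of~\cite{EK19} by exploiting the Fricke--Vogt rigidity of degree-two local systems on $X$.

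First, every lift $\rho \in \sS_{\bar\rho}$ is automatically tame. The wild inertia at each puncture is pro-$p$, and any compact pro-$p$ subgroup of $\GL_2(\bar\Q_\ell)$ is finite of $p$-power order (hence injects under reduction modulo~$\ell$); tameness of $\bar\rho$ forces the reduction to be trivial, so $\rho$ factors through $\pi_1^t(X) \simeq \hat F_2^{(p')}$, generated by loops $\gamma_0, \gamma_1$ with $\gamma_\infty = (\gamma_0\gamma_1)^{-1}$. Setting $A = \rho(\gamma_0)$, $B = \rho(\gamma_1)$, $C = (AB)^{-1} = \rho(\gamma_\infty)$, the Fricke--Vogt theorem identifies the absolutely irreducible locus $\sS_{\bar\rho}^{\mathrm{irr}}$ with an open subspace of the $5$-dimensional rigid analytic variety with coordinates $(\trace A, \det A, \trace B, \det B, \trace AB)$. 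Equivalently, since $A$, $B$, $C$ are semisimple with tame eigenvalues, these coordinates are elementary symmetric functions of six eigenvalues $\alpha_j^{(v)}$ (indexed by $v \in \{0,1,\infty\}$ and $j \in \{1,2\}$), each a tame character of the local inertia at $v$, hence a point of the one-dimensional rank-one tame character deformation space on $\G_m$. The irreducible locus $\sS_{\bar\rho}^{\mathrm{irr}}$ is then the $S_2^3$-quotient of the hypersurface in the six-fold product of these $1$-dimensional spaces cut out by the global determinant relation $\prod_v \alpha_1^{(v)}\alpha_2^{(v)} = 1$.

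The Frobenius $\Phi$ acts coordinate-wise in this parametrization: for the standard arithmetic model of $X$ over $\F_q$, all three punctures are $\F_q$-rational, so Frobenius sends each $\gamma_v$ to a conjugate of $\gamma_v^q$, and hence acts on each eigenvalue character by $\alpha \mapsto \alpha^q$. This is precisely the rank-one local Frobenius action on $\G_m$ whose dynamics is controlled by~\cite{EK19}. In particular, an arithmetic global lift restricts to six arithmetic local characters; conversely, by Fricke--Vogt rigidity, any compatible six-tuple of arithmetic local characters determines an arithmetic global lift, up to a finite Frobenius-controlled choice.

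To conclude, for a $\Phi^n$-stable closed subset $Z \subset \sS_{\bar\rho}$ one would transport $Z$ under the above parametrization and apply rank-one Strong density on each of the six factors. The main obstacle I expect is precisely at this step: a Frobenius-stable Zariski-closed subset of a product of character tori need not be of product form, so the rank-one result cannot be applied factorwise in a na\"ive way. Overcoming this requires a structural result showing that any such Frobenius-stable subvariety is a finite union of translates by arithmetic points of monomial subvarieties (cut out by equations $\prod \alpha^{n_\alpha} = \zeta$ for roots of unity $\zeta$), on each of which rank-one Strong density applies directly. Establishing this Frobenius rigidity for $\ell$-adic character tori --- analogous to classical rigidity theorems for Frobenius dynamics on algebraic tori --- is the technical crux of the proof, alongside separately handling the (easier) locus where $\bar\rho$ is reducible by direct application of the rank-one result.
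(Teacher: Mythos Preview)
Your strategy matches the paper's proof closely: embed $\sS_{\bar\rho}$ via characteristic polynomials (equivalently eigenvalues) of the three local monodromies, observe that Frobenius acts as $[q]$ in these coordinates, and reduce Strong density to a statement on a six-dimensional formal multiplicative group. Two points where the paper streamlines your outline are worth noting.

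First, the structural result you flag as the ``technical crux'' --- that a $[q]$-stable Zariski closed subset of the $\bar\Z_\ell$-points of a formal torus is a finite union of torsion translates of formal subtori, on which torsion points are then dense --- is precisely~\cite[Theorem~1.7 and Lemma~3.1]{EK19}. So this input is already available as a black box applied to the full six-dimensional torus at once; there is no need to attempt a factorwise reduction to rank one and then repair it.

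Second, your split into the absolutely irreducible locus (handled via Fricke--Vogt) and a separate reducible case is unnecessary and a bit fragile, since a given $\Phi$-stable closed $Z$ need not respect that stratification. The paper avoids the issue by working with pseudorepresentations: the same Fricke-type trace identities hold for two-dimensional determinants over arbitrary rings, so $\Ch_{\underline g}\colon \PD_{\bar\rho}\to \Def_{\bar\rho(\underline g)}$ with $\underline g=(g_0,g_1,g_0g_1)$ is a closed immersion on \emph{all} of $\PD_{\bar\rho}$ (Lemma~\ref{lem:pdefdim2}). This embeds the whole of $\sS_{\bar\rho}$ at once, and your global determinant constraint is simply part of the closed image rather than a hypersurface to be handled separately.
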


 We now explain the ideas of our proofs.  The main ingredient in the proof of  Theorem~A
 is de Jong's conjecture~\cite{deJ01}  proven in \cite[\S~1.4]{Gai07}  under the assumption
 $\ell > 2$, using the geometric Langlands program. Indeed, if $\bar \rho$ is absolutely irreducible, then $\sS_{\bar \rho}$ is the set of $\bar \Q_\ell$-points of Mazur's deformation  space which is smooth if $X$ is a curve, and on which we can apply de Jong's technique  \cite[\S~3.14]{deJ01}.

The proof of Theorem~B is very different. We embed $\sS_{\bar \rho}$   in the completion
of the affine space of dimension $6$ at the closed point which corresponds to the
characteristic polynomials  of three well chosen elements of the geometric fundamental
group $G$
on which $\Phi$ acts by raising to the  $q^\mathrm{th}$ power.
 We can then apply  our main density theorem in~\cite{EK19} on the cover which separates the roots of those polynomials.
 In particular, this also shows that the arithmetic points are precisely those which have quasi-unipotent monodromy at infinity.
We remark in Section~\ref{ss:dJ} that our method
yields a proof de Jong's conjecture  in this particular case, which does not use
automorphic forms.

\bigskip

\subsection*{Acknowledgements} We thank Daniel Litt for discussions around the topic of our note,
 Michel Brion for a discussion on invariants,  Ga\"etan Chenevier and Gebhard B\"ockle for a discussion on induced determinants.
 We thank Akshay Venkatesh and Mark Kisin for the Remark~\ref{rmk:akshaymark}.  We thank the referee for the friendly and helpful report.

\section{The Zariski topology on the set of semi-simple
  representations} \label{sec:zartop}

Let $\ell$ be a prime number, $\sO$ be the ring of integers of a finite extension of $\Q_\ell$ with residue field $\F$,
 $\sO\hookrightarrow \bar \Q_\ell$ be an embedding  of $\sO$ into an algebraic closure of $\bar \Q_\ell$  defining an embedding of $\F$ into an algebraic closure  $
\bar \F$.
Let $G$ be a pro-finite group which satisfies Mazur's $\ell$-finiteness property, {\it i.e.}\
for any open subgroup $U\subset G$ the set $\Hom_{\rm cont}(U, \mathbb Z/\ell \mathbb Z)$
is finite. Let
\ga{}{ \bar \rho:G\to \GL_r(\F)  \notag}
be a continuous representation.
We define  $ \mathcal S_{\bar \rho}$
 to be the {\it  set of isomorphism classes of continuous semi-simple representations}
 $\rho\colon G\to \GL_r(\bar \Q_\ell)$ {\it with semi-simple reduction isomorphic to }
 $\bar \rho^{\rm ss}: G\to \GL_r(\F)\subset \GL_r(\bar \F)$.
In this section we define a Zariski topology on $ \mathcal S_{\bar \rho}$. In
Section~\ref{sec:defrings} we relate $\sS_{\bar\rho}$ to the deformation space of pseudorepresentations.

\medskip

For a finite family $\underline g = (g_1, \ldots ,  g_m)\in G^m$,  let $p_i$ be the
characteristic polynomial  ${\Ch}(\bar \rho(g_i))$ of $\bar \rho (g_i)$. Then $\underline
p= (p_1, \ldots , p_m)$ is an $\F$-point of the affine space $\A^{rm}_\sO$ over $\sO$.
Let $R_{\underline p}=R_{\bar \rho(\underline{g})}$ be the complete local ring of $\A^{rm}_{\sO}$ at the
closed point $\underline p$. The ring  $R_{\bar \rho(\underline{g})}
\otimes_\sO \bar \Q_\ell $ is  Noetherian Jacobson.
Its maximal ideals  correspond to the $m$-tuples of polynomials over $\bar
\Z_\ell$ with reduction $\underline p$, see~\cite[Propositions~A.2.2.2 and~A.2.2.3]{GL96}.
Sending a representation   $\rho\colon G\to \GL_r(\bar \Q_\ell)$ in $\sS_{\bar \rho}$ to the family of characteristic polynomials
$(\Ch (\rho(g_1)), \ldots ,\Ch (\rho(g_m) ))$ therefore
induces a map
\[
  \Ch_{\underline g} \colon \sS_{\bar \rho}\to \Spm (R_{\bar
  \rho(\underline{g})} \otimes_\sO \bar \Q_\ell ).
\]
We endow the maximal spectrum with the usual Zariski topology, which is thus Noetherian.
\begin{prop}\label{prop.Zartop}
  There  exists an integer $\tilde m >0$ and  a  family $\underline{\tilde g}\in G^{\tilde m}$ such that for
  any finite family $\underline{g}\in G^{ m}$ which contains $\underline{\tilde g} $ we have:
\begin{itemize}
\item[\rm (1)]
$\Ch_{\underline g} $ is injective with Zariski closed image.
\item[\rm (2)]
The induced topologies on $\sS_{\bar\rho}$ via the embeddings $\Ch_{\underline g} $ and
$\Ch_{\underline{\tilde g}} $ are the same.
\end{itemize}
\end{prop}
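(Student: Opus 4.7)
The plan is to identify $\sS_{\bar\rho}$ with the maximal spectrum of the universal continuous pseudorepresentation (Chenevier determinant law) deformation ring $R^{\mathrm{ps}}$ of $\bar\rho^{\mathrm{ss}}$, and then to deduce the proposition by comparing $R^{\mathrm{ps}}$ with the local rings $R_{\bar\rho(\underline g)}$ via the characteristic polynomial coefficients.

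Let $D^u\colon \sO[G]\to R^{\mathrm{ps}}$ denote the universal continuous $r$-dimensional determinant law deforming $\det(\bar\rho^{\mathrm{ss}})$. The first step is to invoke Chenevier's theory, combined with Mazur's $\ell$-finiteness property of $G$, to conclude that $R^{\mathrm{ps}}$ is a complete local Noetherian $\sO$-algebra with residue field $\F$, and that the natural map $\iota\colon \sS_{\bar\rho}\to \Spm(R^{\mathrm{ps}}\otimes_\sO\bar\Q_\ell)$, sending $\rho$ to its determinant law, is a bijection. Injectivity uses Brauer--Nesbitt in characteristic zero (semi-simple representations over a characteristic zero field are determined by their traces); surjectivity uses that over an algebraically closed field of characteristic zero every $r$-dimensional determinant law arises from a unique semi-simple representation.

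Next, for any finite family $\underline g=(g_1,\ldots,g_m)\in G^m$, the coefficients of the polynomials $D^u(T-g_i)\in R^{\mathrm{ps}}[T]$ lie in the maximal ideal of $R^{\mathrm{ps}}$ and reduce in $\F$ to the coefficients of $\Ch(\bar\rho(g_i))$. Since $R_{\bar\rho(\underline g)}$ is the completed local ring of an affine space at a closed point, these data define a continuous $\sO$-algebra homomorphism $\varphi_{\underline g}\colon R_{\bar\rho(\underline g)}\to R^{\mathrm{ps}}$ such that the induced map on maximal spectra $\Spm(R^{\mathrm{ps}}\otimes\bar\Q_\ell)\to \Spm(R_{\bar\rho(\underline g)}\otimes\bar\Q_\ell)$, precomposed with $\iota$, equals $\Ch_{\underline g}$. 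Because the coefficients of $D^u(T-g)$ for $g\in G$ topologically generate $R^{\mathrm{ps}}$ over $\sO$, Noetherianity lets us choose a finite family $\underline{\tilde g}$ so that $\varphi_{\underline{\tilde g}}$, and hence $\varphi_{\underline g}$ for every $\underline g\supset \underline{\tilde g}$, is surjective.

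Surjectivity of $\varphi_{\underline g}$ then means that $\Spm(R^{\mathrm{ps}}\otimes\bar\Q_\ell)\hookrightarrow \Spm(R_{\bar\rho(\underline g)}\otimes\bar\Q_\ell)$ is a closed immersion, so $\Ch_{\underline g}$ is injective with Zariski closed image, yielding~(1). The topology on $\sS_{\bar\rho}$ pulled back via $\Ch_{\underline g}$ is then identified, through $\iota$, with the Zariski topology on $\Spm(R^{\mathrm{ps}}\otimes\bar\Q_\ell)$, which is manifestly independent of $\underline g\supset \underline{\tilde g}$; this yields~(2). The main technical input is the Noetherianity of $R^{\mathrm{ps}}$ under Mazur's $\ell$-finiteness property, together with the bijectivity of $\iota$; this I would expect to be the content of Section~\ref{sec:defrings}, where the pseudorepresentation deformation rings are discussed.
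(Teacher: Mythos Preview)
Your proposal is correct and follows essentially the same approach as the paper. The paper identifies $\sS_{\bar\rho}$ with $\Spm(R^P_{\bar\rho}\otimes_\sO\bar\Q_\ell)$ via Chenevier's theory (Proposition~\ref{prop:spseudo} and equation~\eqref{eq.semsim}), then uses that $R^P_{\bar\rho}$ is topologically generated by finitely many characteristic-polynomial coefficients (Proposition~\ref{prop:reppseudo}(2), packaged as Lemma~\ref{lem:fin}) to conclude that $\Ch_{\underline g}$ is a closed immersion for any sufficiently large family---exactly your argument via the surjectivity of $\varphi_{\underline g}$.
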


Proposition~\ref{prop.Zartop} is  an immediate consequence of  Lemma~\ref{lem:fin}.
{\it  From now on we  endow}  $\sS_{\bar\rho}$ {\it with the induced Zariski topology from Proposition}~\ref{prop.Zartop}.

\begin{rmk}\label{rmk:l-adictop}
By the same procedure we can define the $\ell$-adic topology on  $\sS_{\bar\rho}$, which
we do not consider in this note, compare~\cite[Theorem~D]{Che14}, and \cite{Lit19} where it is used in an essential way.
\end{rmk}

\section{The density conjectures} \label{sec:conj}
In this section we   formulate  a strong conjecture and a weak one  on the density of arithmetic representations
in the Zariski space of all semi-simple representations $\sS_{\bar \rho}$ defined in Section~\ref{sec:zartop}. Then we formulate our main results
concerning them.

\smallskip

Let $X_0$ be a smooth geometrically connected variety defined over a finite field $k=\F_q$
of characteristic $p\neq \ell$.  Set $X= X_0\otimes_k \bar k$, where $\bar k$ is an
algebraic closure of $k$.  Fix a geometric point $x\in X_0( \bar k)$  and let $G$ be the
geometric fundamental group $\pi_1^{\rm \acute{e}t}(X,x)$.  Fix  a
lift $\Phi\in \pi_1^{\rm \acute{e}t}(X_0,x)$ of the {\it arithmetic Frobenius}.  Then $\Phi$ acts by conjugation on $G$.
This action depends on the lift up to an inner automorphism, so it canonically
acts on isomorphism classes of representations of $G$.  We assume that
\ga{}{\Phi(\bar \rho) \ \text{ is isomorphic to }\ \bar \rho\notag}
which is always fulfilled after replacing $\Phi$ by a power, or equivalently $X_0$ by $X_0\otimes_k k'$ for a finite extension $k'$ of $k$.
Thus the action of $\Phi$ on $G$ induces a well defined automorphism $\Phi$ of
$\sS_{\bar \rho}$.
By the construction of the Zariski topology on  $\sS_{\bar \rho}$ via
 Proposition~\ref{prop.Zartop} the automorphism $\Phi$ is a homeomorphism.

 We define the {\it arithmetic points} of $\sS_{\bar \rho}$ as the fixed points of powers
of $\Phi$
\ga{}{\sA_{\bar \rho}:= \bigcup_{n>0} \sS_{\bar \rho}^{\Phi^n}.\notag}

\begin{rmk}\label{rmk:arithext}
The arithmetic points in  $\sS_{\bar \rho}$ correspond to those continuous semi-simple representations
$\rho\colon G\to \GL_r(\bar \Q_\ell)$  which can be extended to a continuous representation
\[
  \pi_1^{\rm \acute{e}t}(X_0\otimes_k k',x) \to \GL_r(\bar \Q_\ell)
\]
for some finite extension $k'\subset \bar k$ of $k$,  see~\cite[\S~1.1.14]{Del80}.
\end{rmk}

\begin{conj}[Weak Conjecture] \label{conj:weak}
The space $\sS_{\bar \rho}$ is the Zariski closure of its arithmetic points
 $\sA_{\bar \rho}$.
\end{conj}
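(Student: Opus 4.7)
Since this is stated as a conjecture and a full proof is not expected, I describe the natural strategy, which combines the two reductions announced in the introduction with the deformation-theoretic machinery that yields Theorem~A. The plan is to reduce to the case of a smooth curve with absolutely irreducible residual representation, and then deduce density from the fact that, in that situation, $\sS_{\bar\rho}$ is the set of $\bar\Q_\ell$-points of a smooth formal scheme on which one has good control of the $\Phi$-action via \cite{deJ01,Gai07}.

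First I would invoke the two reductions quoted in Section~1: using a suitable Lefschetz pencil/Bertini argument, density on a smooth variety $X$ is reduced to density on smooth curves; and by an \'etale pullback and a Belyi-type argument, density on smooth curves is reduced to density on $\P^1_{\bar k}\setminus\{0,1,\infty\}$ for tame $\bar\rho$. Next I would treat the non-absolutely-irreducible case by induction on $r$. Any semi-simple $\bar\rho$ breaks as a direct sum of absolutely irreducible representations, and after enlarging $k$ the Frobenius permutes the summands; Clifford-type reasoning together with the fact that pseudo-representations on $\sS_{\bar\rho}$ are controlled by the characteristic polynomials of the summands should allow one to pass to the irreducible constituents. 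The subtle point is that the Zariski topology on $\sS_{\bar\rho}$ built from Proposition~\ref{prop.Zartop} is only well-behaved with respect to semi-simplification, so I would have to compare the deformation space of $\bar\rho$ with a product of deformation spaces of the absolutely irreducible constituents, modulo the action of the outer automorphisms permuting isomorphic factors.

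Once $\bar\rho$ is absolutely irreducible and $X$ is a smooth curve, Mazur's theory identifies $\sS_{\bar\rho}$ with $\Spm(R^{\mathrm{univ}}\otimes_\sO\bar\Q_\ell)$, where $R^{\mathrm{univ}}$ is the universal deformation ring of $\bar\rho$. The key input is that, for $\ell>2$, Gaitsgory's theorem \cite{Gai07} implying de Jong's conjecture forces $R^{\mathrm{univ}}$ to be a formal power series ring over $\sO$: the obstruction classes in $H^2(G,\mathrm{ad}(\bar\rho))$ that would otherwise prevent smoothness are killed because any formal deformation to $\F[[t]]$ is controlled by the finite-image statement of de Jong. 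The Frobenius lift $\Phi$ then acts by a continuous $\sO$-algebra automorphism of $R^{\mathrm{univ}}$, which on a suitable set of formal coordinates $x_1,\dots,x_d$ acts with all Frobenius weights of absolute value a positive power of $q$.

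The remaining step, which is where I expect the main obstacle, is to show that the set of $\bar\Q_\ell$-points of $\Spm(R^{\mathrm{univ}}\otimes_\sO\bar\Q_\ell)$ that are periodic under $\Phi$ is Zariski dense. For this I would combine two inputs: on the one hand, de Jong's technique \cite[\S~3.14]{deJ01} produces, starting from any mod-$\ell^n$ deformation of $\bar\rho$, an arithmetic lift with prescribed reduction; on the other hand, the weight structure of $\Phi$ on the cotangent space at the origin implies, by a linearization argument along the lines of \cite[Theorem~1.7]{EK19}, that the set of common zeros of the functions $\{f-\Phi^n(f)\}_{n\ge 1}$ is dense. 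The global density statement then follows by approximating any Zariski-closed proper subset $Y\subsetneq\sS_{\bar\rho}$ by a formal subscheme of $\Spf R^{\mathrm{univ}}$ and exhibiting an arithmetic point outside $Y$. The main difficulty I anticipate is in the $\Phi$-equivariant control of the deformation space when $\bar\rho$ fails to be absolutely irreducible, and in extracting genuine density (rather than mere non-emptiness) of arithmetic points from de Jong's lifting technique.
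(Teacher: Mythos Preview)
The statement is a conjecture which the paper does not prove in general; what the paper actually establishes is the special case Theorem~A (curve, $\ell>2$, $\bar\rho$ absolutely irreducible). So your proposal should be read against that proof and against the reductions in Section~\ref{sec:resind}.

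There are two genuine gaps. First, your opening move---reducing the Weak Conjecture on $X$ to the Weak Conjecture on a curve (and then to $\P^1\setminus\{0,1,\infty\}$)---does not work as stated. The reductions of Proposition~\ref{prop:lefbelyi} are proved only for the \emph{Strong} Conjecture, and for good reason: the Bertini argument produces a $\Phi$-equivariant closed embedding $\sS_{\bar\rho}\hookrightarrow\sS_{\iota^*\bar\rho}$, and knowing merely that arithmetic points are dense in the ambient $\sS_{\iota^*\bar\rho}$ says nothing about density in the closed $\Phi$-stable subspace $\sS_{\bar\rho}$. To conclude anything about $\sS_{\bar\rho}$ you need the Strong Conjecture on the curve side, not the Weak one. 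The same remark applies to the Belyi step and to the restriction/induction arguments of Proposition~\ref{prop:zigzag}, which are all formulated for the property~${\bf (D)}$.

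Second, your description of the curve/absolutely irreducible case misidentifies where de~Jong's conjecture enters and how density is extracted. In the paper, formal smoothness of $R_{\bar\rho}$ (Lemma~\ref{lem:sm}) is a pure cohomological-dimension/Poincar\'e duality statement for curves and has nothing to do with de~Jong; there is no need to ``kill'' obstruction classes via $\F[[t]]$-deformations. De~Jong's conjecture is used instead in Proposition~\ref{prop:frob_coinv} to show that the $\Phi^n$-coinvariants $(R_{\bar\rho})_{\Phi^n}$ are finite flat complete intersections over $\sO$. Density (Theorem~A) is then not obtained by any linearization or weight argument on the cotangent space \`a la~\cite{EK19}; rather, Proposition~\ref{prop:red} uses the Langlands correspondence to show that each generic fibre $(R_{\bar\rho})_{\Phi^n}\otimes_\sO\bar\Q_\ell$ is reduced, so any function vanishing on $\sA_{\bar\rho}$ dies in every $(R_{\bar\rho})_{\Phi^n}$ and hence is zero by the injectivity $R_{\bar\rho}\hookrightarrow\varprojlim_n(R_{\bar\rho})_{\Phi^n}$ (Claim~\ref{claim:intvan}). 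Your ``weight structure of $\Phi$ on the cotangent space'' heuristic is closer to the mechanism of Theorem~B, not Theorem~A.
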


\begin{conj}[Strong Conjecture] \label{conj:strong}
A  Zariski closed subset $Z\subset \sS_{\bar \rho}$  with $\Phi^n(Z)=Z$ for some integer $n>0$ is the Zariski closure of its arithmetic points
$Z\cap \sA_{\bar \rho}$.
\end{conj}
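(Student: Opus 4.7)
The plan is to adapt the strategy outlined for Theorem~B to the general setting, invoking the reductions from Section~\ref{sec:resind}. First I would reduce to the case where $X$ is a smooth curve via the Lefschetz-type restriction argument indicated in the introduction. Then, using the Belyi-type second reduction, I would further reduce to the case $X = \P^1_{\bar k}\setminus\{0,1,\infty\}$ with $\bar\rho$ tame.

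For a given $\Phi^n$-stable Zariski closed subset $Z \subset \sS_{\bar\rho}$, I would fix a finite family $\underline g = (g_1,\dots,g_m)\in G^m$ satisfying the conclusion of Proposition~\ref{prop.Zartop}, chosen so that Frobenius conjugation sends each $g_i$ to a conjugate of a power of itself. Concretely, one takes the $g_i$ to be topological generators of the tame inertia at each puncture, so that $\Phi(g_i)$ is conjugate to $g_i^q$. The embedding $\Ch_{\underline g}$ then realizes $Z$ as a closed subset of $\Spm(R_{\bar\rho(\underline g)}\otimes_\sO \bar\Q_\ell)$, and the induced $\Phi$-action on this ambient space is, at the level of the roots of the characteristic polynomials, the $q$th-power endomorphism.

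To make this rigorous, I would pass to the finite étale cover on which the characteristic polynomials split into linear factors. On this cover the coordinates are genuine eigenvalues, and $\Phi$ acts by $\lambda \mapsto \lambda^q$ on a product of formal multiplicative groups. The main density theorem of~\cite{EK19}, which guarantees density of $q$-power-fixed points in any $\Phi^n$-invariant closed subset of a formal torus, then applies, and descending along the cover produces a dense set of arithmetic points in $Z$.

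The principal obstacle, and the reason the paper restricts to the tame rank two case on the thrice-punctured line, is the selection of the family $\underline g$ in the general setting. For higher rank $r$, the dimension $rm$ of the ambient affine space grows, the image of $\Ch_{\underline g}$ becomes a complicated closed subvariety whose equations come from relations in $G$, and it is not transparent how to identify $Z$ with a subset of a formal torus on which the $q$th-power map acts cleanly. Worse, in the presence of wild ramification the $\Phi$-action on the $g_i$ is no longer given simply by raising to a power, so the compatibility between $\Phi$ on $\sS_{\bar\rho}$ and a tractable action on the ambient space breaks down. Overcoming these difficulties appears to require genuinely new input beyond what is used in Theorem~B.
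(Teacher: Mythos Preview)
The statement you are attempting to prove is Conjecture~\ref{conj:strong}, which is presented in the paper as an \emph{open conjecture}, not a theorem. There is no proof of it in the paper; the paper only establishes the special case recorded as Theorem~B (tame rank two on $\P^1_{\bar k}\setminus\{0,1,\infty\}$) and the rank one case from~\cite{EK19}. So there is no ``paper's own proof'' to compare against.

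Your outline accurately recovers the two reductions of Proposition~\ref{prop:lefbelyi} and the argument of Section~\ref{sec:thmB}, and you have correctly located the genuine obstruction. The point that makes Theorem~B go through is Lemma~\ref{lem:pdefdim2}: in rank two, with $G$ topologically generated by $g_0,g_1$, the family $\underline g=(g_0,g_1,g_0g_1)=(g_0,g_1,g_\infty^{-1})$ already makes $\Ch_{\underline g}$ a closed immersion, and every member of this family is an inertia generator on which $\Phi$ acts (up to conjugacy) by the $q$-th power. This is precisely the rigidity of rank two local systems on the thrice-punctured line noted in Remark~\ref{rmk:akshaymark}. For $r\ge 3$ there is no analogue of Lemma~\ref{lem:pdefdim2}: the family $\underline{\tilde g}$ required by Proposition~\ref{prop.Zartop} must in general contain words in $g_0,g_1$ that are not conjugate to powers of a single inertia generator, and on such elements the Frobenius action does not reduce to a $q$-th power map on eigenvalues. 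Consequently the passage to a formal torus with the $[q]$-endomorphism, and hence the appeal to~\cite[Theorem~1.7]{EK19}, breaks down exactly where you say it does. Your proposal is therefore not a proof of the conjecture but rather a correct diagnosis of why the method of Theorem~B does not extend.
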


Note that the formulation of the  conjectures depends only on $X$ and not on the choice of $X_0$ or the base
point $x$.

\begin{rmk} If $r=1$ and $X$ is projective or $X$ is a torus, then the strong conjecture is true by virtue of
\cite[Theorem~1.7 and Lemma~3.1]{EK19}.
\end{rmk}
\begin{rmk}
If we endow $\sS_{\bar \rho}$ with the $\ell$-adic topology as in
Remark~\ref{rmk:l-adictop},  then the subset of arithmetic points $\sA_{\bar \rho}$ is
discrete and closed, see~\cite[Theorem~1.1.3]{Lit19}.
\end{rmk}
Using the Lefschetz theorem on fundamental groups and the  Belyi principle we reduce in Section~\ref{sec:resind}  the Strong Conjecture
to the case where $X$ is a curve.
\begin{prop}\label{prop:lefbelyi}
  For varieties over  the fixed field $\bar k$ we have the implications:
  \begin{itemize}
    \item[\rm (1)]
If for fixed $r$ the Strong Conjecture holds for $\dim(X)=1$, then it holds for any $X$
and the given degree~$r$.
\item[\rm (2)] If the Strong conjecture holds for all $r>0$ for tame representations $\bar
  \rho$ on the variety
  $X=\P^1_{\bar k}\setminus \{0,1, \infty\}$, then it holds in general.
  \end{itemize}
\end{prop}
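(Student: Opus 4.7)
The proposition splits into two independent reductions.

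\textbf{Reduction (1), to curves.} For a smooth geometrically connected $X$ over $\bar k$ with $\dim X>1$ and a $\Phi^n$-stable closed $Z\subset\sS_{\bar\rho}$, the plan is to exhibit, by iterated Bertini/Lefschetz arguments for étale fundamental groups in positive characteristic, a smooth connected curve $Y\hookrightarrow X$ with $\pi_1^{\rm \acute{e}t}(Y)\twoheadrightarrow\pi_1^{\rm \acute{e}t}(X)$; choosing pencils defined over a common finite subfield of $\bar k$ makes the construction Frobenius-equivariant. Together with the pseudo-representation description of Proposition~\ref{prop.Zartop}, this surjectivity yields a $\Phi$-equivariant closed embedding
\[
 j\colon\sS_{\bar\rho}(X)\hookrightarrow\sS_{\bar\rho|_Y}(Y),
\]
cut out by the relations $\Ch\rho(g)=\Ch\rho(g')$ for pairs $g,g'\in\pi_1^{\rm\acute{e}t}(Y)$ with equal image in $\pi_1^{\rm\acute{e}t}(X)$. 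Since both $\Phi$-actions come from the same conjugation on $\pi_1^{\rm\acute{e}t}(X)$, the embedding $j$ preserves and reflects arithmeticity, so the Strong Conjecture applied to the $\Phi^n$-stable closed set $j(Z)$ on the curve $Y$ gives density of arithmetic points in $Z$.

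\textbf{Reduction (2), to $\P^1\setminus\{0,1,\infty\}$ with tame representations.} Starting from a smooth curve $X$ over $\bar k$ and any $\bar\rho$, the plan is to construct, over a finite subfield of $\bar k$, a finite étale map $\phi\colon X''\to U:=\P^1_{\bar k}\setminus\{0,1,\infty\}$ such that the induced representation
\[
 \bar\sigma:=\Ind_{\pi_1^{\rm\acute{e}t}(X'')}^{\pi_1^{\rm\acute{e}t}(U)}(\bar\rho|_{X''})
\]
is tame on $U$. This is done in two stages. First, an auxiliary cover $X'\to X$ is built from local Artin--Schreier--Witt covers absorbing the wild inertia of $\bar\rho$ at each boundary point, making the pullback $\bar\rho'$ tame on the smooth compactification of $X'$. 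Second, Katz's positive-characteristic tame Belyi theorem produces a finite map from the compactification of $X'$ to $\P^1$ tamely ramified only over $\{0,1,\infty\}$, with missing points of $X'$ lying above them; intersecting with $U$ yields $\phi$ and $X''=\phi^{-1}(U)$. Tameness of $\bar\sigma$ then follows from tameness of $\bar\rho|_{X''}$ and of $\phi$ via Abhyankar's lemma.

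Induction defines a $\Phi$-equivariant closed embedding $\Ind_*\colon\sS_{\bar\rho|_{X''}}(X'')\hookrightarrow\sS_{\bar\sigma}(U)$: the induced-character formula writes the characteristic polynomial of $\Ind\rho(g)$ as a polynomial expression in the characteristic polynomials of $\rho$ on $\pi_1^{\rm\acute{e}t}(U)$-conjugates of powers of $g$, giving closedness and injectivity in the topology of Proposition~\ref{prop.Zartop}, while arithmeticity is preserved and reflected. Applying the Strong Conjecture for tame representations on $U$ and pulling back along $\Ind_*$, and then along the restriction embeddings for the tower $X''\to X'\to X$ (the étale part handled as in Reduction (1), the wild part by a Mackey-type descent on pseudo-representations), transports density back to $(X,\bar\rho)$.

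\textbf{Main obstacle.} The delicate ingredient is arranging tameness in Reduction (2): constructing the wild auxiliary cover $X'\to X$ that trivializes the wild part of $\bar\rho$, coordinating it with Katz's Belyi construction over a single finite field so that the $\Phi$-action and the arithmetic locus transport correctly through the whole tower, and descending density through the ramified step $X'\to X$ via a pseudo-representation Mackey argument. The closedness of $\Ind_*$ for the Noetherian Zariski topology of Proposition~\ref{prop.Zartop} is a secondary but non-trivial point, handled by an explicit polynomial identity for induced characters.
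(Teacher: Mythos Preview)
Your overall architecture is close to the paper's, but each part has a genuine gap.

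\textbf{Part (1).} The assertion that one can find a curve $Y\hookrightarrow X$ with $\pi_1^{\rm\acute{e}t}(Y)\twoheadrightarrow\pi_1^{\rm\acute{e}t}(X)$ surjective is false in general for open $X$ in positive characteristic. For example, for $X=\A^2_{\bar k}$ and any line $L\colon x_2=ax_1+b$, the Artin--Schreier cover $y^p-y=x_2-ax_1-b$ of $\A^2$ restricts to the trivial cover of $L$; so $\pi_1^{\rm\acute{e}t}(L)\to\pi_1^{\rm\acute{e}t}(\A^2)$ is never surjective. Bertini only gives connectedness of the pullback for a \emph{fixed} finite cover, and in characteristic~$p$ there are uncountably many such covers, so no single curve works for all of them. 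The paper circumvents this: by Proposition~\ref{prop:reppseudo}(2) and~\cite[Lemma~7.52]{Che14} there is a specific open normal $U\subset G$ such that surjectivity of $\pi_1^{\rm\acute{e}t}(C)\to G/U$ already forces $\PD_{\bar\rho}\hookrightarrow\PD_{\iota^*\bar\rho}$ to be a closed immersion (Proposition~\ref{prop:curve}). One Bertini application suffices for that single finite quotient.

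\textbf{Part (2).} The map $\Ind_*\colon\sS_{\bar\rho|_{X''}}\to\sS_{\bar\sigma}$ is \emph{not} injective: for $U\lhd G$ and any $s\in G$ one has $\Ind_U^G\rho\cong\Ind_U^G\rho^s$, so distinct conjugate twists collapse. Your injectivity argument via induced-character formulae cannot succeed. This is precisely why the paper does not use induction on the level of deformation spaces; it notes explicitly that such a morphism is not documented. Instead the paper factors through the finite map $\xi\colon\PD_{\bar\rho_U}\to\PD_{\bar\rho|_U}$ (Lemma~\ref{lem:indfinite}) together with the finite restriction $\PD_{\bar\rho}\to\PD_{\bar\rho|_U}$, and transfers density via the purely topological Lemma~\ref{lem:densprop}(2), which needs only finiteness, not injectivity. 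Separately, your Artin--Schreier--Witt construction to tame $\bar\rho$ is unnecessary: since $\bar\rho$ has finite image, the finite \'etale cover trivializing it already makes the pullback trivial, hence tame, and the step $X'\to X$ is then an honest finite \'etale cover handled by Proposition~\ref{prop:zigzag}(1), with no ``wild Mackey descent'' needed. Finally, the tame Belyi map does not automatically send the missing points over $\{0,1,\infty\}$; the paper composes with $z\mapsto z^n$ to arrange this.
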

The main results of our note are  the following.
\begin{thm}[Theorem A]\label{thmA} Assume that $\ell>2$.
If $\bar \rho$ is absolutely irreducible and $X$ is a curve, then the weak conjecture holds.
\end{thm}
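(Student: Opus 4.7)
The plan is to identify $\sS_{\bar \rho}$ with the $\bar \Q_\ell$-points of Mazur's universal deformation space of $\bar \rho$ and then apply the technique of~\cite[\S3.14]{deJ01}, feeding in the resolution of de~Jong's conjecture by Gaitsgory~\cite{Gai07} (which is the source of the hypothesis $\ell > 2$). Since $\bar \rho$ is absolutely irreducible and $G = \pi_1^{\rm \acute{e}t}(X,x)$ satisfies Mazur's $\ell$-finiteness property, a universal deformation ring $R = R^{\rm univ}_{\bar \rho}$ exists together with a universal representation $\rho^{\rm univ}\colon G \to \GL_r(R)$, and the construction of Section~\ref{sec:zartop} identifies $\sS_{\bar \rho}$ with $\Spm(R \otimes_{\sO} \bar \Q_\ell)$ as a Noetherian Zariski space.

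Next I would verify that $R$ is formally smooth over $\sO$ when $X$ is a curve: the obstructions to lifting lie in $H^2(G, \Ad(\bar \rho))$, which vanishes automatically for affine curves by cohomological dimension, and for projective curves by Poincar\'e duality together with the vanishing of $H^0(G, \Ad(\bar \rho))$ coming from absolute irreducibility. Thus $R \simeq \sO[[t_1, \ldots, t_N]]$ and $\sS_{\bar \rho}$ identifies with the closed points of a smooth affine $\bar \Q_\ell$-scheme. The Frobenius $\Phi$ then corresponds, by universality, to an $\sO$-algebra endomorphism $\phi\colon R \to R$ preserving the maximal ideal, and the arithmetic points of $\sS_{\bar \rho}$ are precisely the closed points fixed by some power $\phi^n$.

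To conclude, I would run the argument of~\cite[\S3.14]{deJ01}. Suppose, for contradiction, that the closure of the arithmetic points is a proper subset of $\sS_{\bar \rho}$: then some irreducible component $W$ of $\sS_{\bar \rho}$ avoids this closure, and after replacing $\Phi$ by a suitable power, $W$ is $\Phi$-stable. Since $\Phi$-fixed points are arithmetic, $W$ is necessarily positive-dimensional. Spreading $W$ out to an integral model over $\bar \Z_\ell$ and specialising at a height-one valuation whose residue field has equal characteristic $\ell$ produces a continuous representation $\pi_1^{\rm \acute{e}t}(X_0 \otimes_k k', x) \to \GL_r(F)$ with $F$ a finite extension of $\bar \F_\ell((t))$, absolutely irreducible on the geometric fundamental group since its further mod-$\ell$ reduction is $\bar \rho$. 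De~Jong's conjecture, now a theorem for $\ell > 2$ by Gaitsgory, forces such a representation to have finite geometric image, contradicting the positive-dimensionality of the family parameterised by $W$.

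The entire weight of the theorem rests on this last input: Mazur's deformation theory and the smoothness of $R$ are essentially formal once $\bar \rho$ is absolutely irreducible and $X$ is a curve, and the hypothesis $\ell > 2$ enters only through Gaitsgory's geometric-Langlands proof. The main obstacle is therefore packaging the spreading-and-specialisation step so that the hypotheses of Gaitsgory's theorem are met after a controlled enlargement of $k$; this is precisely what de~Jong carries out in \cite[\S3.14]{deJ01}.
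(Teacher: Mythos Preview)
Your opening moves match the paper: the identification of $\sS_{\bar\rho}$ with $\Spm(R_{\bar\rho}\otimes_\sO\bar\Q_\ell)$ for Mazur's ring $R_{\bar\rho}$, and formal smoothness in the affine case, are exactly right. For projective $X$ your smoothness argument has a gap: with $\Ad\bar\rho=\sE nd(\bar\rho)$ one has $H^0(G,\Ad\bar\rho)=\F$ by Schur, not zero, so Poincar\'e duality gives $H^2\cong\F$ and the obstruction space does not vanish. The paper fixes this by comparing with the determinant deformation: the trace map identifies the obstruction with that of $\Def_{\det\bar\rho}$, which is formally smooth because $G^{{\rm ab},\ell}$ is torsion-free for a projective curve.

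The serious problem is your final paragraph. Once $R_{\bar\rho}\cong\sO[[t_1,\dots,t_b]]$, the space $\sS_{\bar\rho}$ is irreducible, so there is no ``irreducible component $W$ avoiding the closure of $\sA_{\bar\rho}$'' to work with; your contradiction never starts. More fundamentally, de~Jong--Gaitsgory alone does not suffice. What the argument of \cite[\S3.14]{deJ01} actually yields is that each coinvariant ring $(R_{\bar\rho})_{\Phi^n}=R_{\bar\rho}/I_n$ is finite flat and a complete intersection over $\sO$; together with the elementary fact $\bigcap_n I_n=0$ this shows that an $\alpha\in R_{\bar\rho}$ whose image vanishes in every $(R_{\bar\rho})_{\Phi^n}$ is zero. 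But vanishing on the arithmetic locus $\sA_{\bar\rho}$ only forces the image of $\alpha$ to be \emph{nilpotent} in each $(R_{\bar\rho})_{\Phi^n}\otimes_\sO\bar\Q_\ell$. The paper closes this gap with a second deep input you omit entirely: Lafforgue's Langlands correspondence makes every arithmetic lift pure of weight zero (after a twist), whence $H^1(X,\sE nd(\sF))^{\Phi^n}=0$ at each such point, so the tangent spaces vanish and $(R_{\bar\rho})_{\Phi^n}\otimes_\sO\bar\Q_\ell$ is reduced. Only then does nilpotent become zero and density follow. Your ``spreading and specialising $W$'' is an attempt to rerun de~Jong's \S3.14 on a subvariety, but it cannot substitute for this reducedness step.
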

\begin{thm}[Theorem B]\label{thmB}
If $X=\P^1_{\bar k}\setminus \{0,1, \infty\}$, $r=2$ and $\bar \rho$ is tame, then the strong conjecture holds.  The arithmetic local systems are then precisely those with  quasi-unipotent monodromies at infinity.
\end{thm}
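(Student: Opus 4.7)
Since $\bar\rho$ is tame, it factors through the tame quotient of $G=\pi_1^{\rm \acute{e}t}(X,x)$, which for $X=\P^1_{\bar k}\setminus\{0,1,\infty\}$ is topologically generated by three inertia elements $\gamma_0,\gamma_1,\gamma_\infty$ at the punctures, with the single relation $\gamma_0\gamma_1\gamma_\infty=1$. The key structural input is that the arithmetic Frobenius $\Phi$ acts on tame inertia by raising to the $q$-th power, so up to an inner automorphism (which is harmless when taking characteristic polynomials) one has $\Phi(\gamma_i)=\gamma_i^q$ for $i\in\{0,1,\infty\}$.

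Applying Proposition~\ref{prop.Zartop} to a family $\underline g$ containing $(\gamma_0,\gamma_1,\gamma_\infty)$, I would first argue that for $r=2$ a Fricke-type theorem — most cleanly phrased via Chenevier's theory of $2$-dimensional determinants — shows that the triple $(\Ch\rho(\gamma_0),\Ch\rho(\gamma_1),\Ch\rho(\gamma_\infty))$ of characteristic polynomials already determines the semi-simple representation $\rho\in\sS_{\bar\rho}$ up to isomorphism. This produces a closed embedding of $\sS_{\bar\rho}$ into the formal completion of $\A^6_\sO$ at the $\F$-point defined by $\bar\rho$, with coordinates the six coefficients $(a_i,b_i)$ of $T^2-a_iT+b_i=\Ch\rho(\gamma_i)$. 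Frobenius acts coordinatewise: if $T^2-a_iT+b_i$ has roots $\lambda_i,\mu_i$, then $\Phi(a_i,b_i)=(\lambda_i^q+\mu_i^q,(\lambda_i\mu_i)^q)$.

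The main idea is then to linearize this action by separating the roots. Pass to the finite cover of the formal completion on which each characteristic polynomial splits, obtained by adjoining the square roots of the three discriminants $a_i^2-4b_i$. Because $\bar\rho$ is tame and $\ell\neq p$, the eigenvalues of $\bar\rho(\gamma_i)$ are prime-to-$\ell$ roots of unity with unique Teichmüller lifts $\tilde\lambda_i,\tilde\mu_i\in\bar\Z_\ell^\times$; writing $\lambda_i=\tilde\lambda_iu_i$ and $\mu_i=\tilde\mu_iv_i$ with $u_i,v_i\in 1+\mathfrak m$, the cover is identified with the formal completion of a torus $\G_m^6$ at the identity, on which $\Phi$ acts diagonally by the $q$-th power $t\mapsto t^q$. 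The multiplicative relation $b_0b_1b_\infty=1$ coming from $\gamma_0\gamma_1\gamma_\infty=1$ cuts out a $\Phi$-stable subtorus. The main density theorem~\cite[Theorem~1.7 and Lemma~3.1]{EK19} now applies: on such a formal torus with $q$-th power Frobenius, every $\Phi$-stable Zariski closed subset is the Zariski closure of its torsion points $\bigcup_n\mathrm{Fix}(\Phi^n)$. Descending along the finite Frobenius-equivariant cover yields the Strong Conjecture on $\sS_{\bar\rho}$.

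A point is thus arithmetic iff each eigenvalue $\lambda_i$ (equivalently each $\mu_i$) is a root of unity, equivalently the characteristic polynomial of $\rho(\gamma_i)$ has roots of unity as roots, i.e.\ the monodromy at every puncture is quasi-unipotent; this gives the final assertion of Theorem~B. The main obstacle I expect is the first step: establishing the closed embedding of $\sS_{\bar\rho}$ into the $6$-dimensional formal space uniformly in the residual shape of $\bar\rho$, and in particular for reducible $\bar\rho$ where one must argue on the moduli of pseudo-representations rather than on a Mazur-style smooth universal deformation ring. The passage to the eigenvalue cover is also delicate at residual points where $\Ch\bar\rho(\gamma_i)$ has a repeated root, since there the discriminant cover is ramified and one must work in the formal category throughout, using the tame Teichmüller decomposition to see the eigenvalues as honest coordinates on a formal torus.
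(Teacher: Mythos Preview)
Your proposal is correct and follows essentially the same route as the paper: embed $\sS_{\bar\rho}$ (via pseudorepresentations, handling the reducible case uniformly) into the six-dimensional formal space of characteristic polynomials of $(g_0,g_1,g_0g_1)$ using the Fricke--Chenevier identity (Lemma~\ref{lem:pdefdim2}), pass to the space of eigenvalues where $\Phi$ becomes $[q]$, translate by Teichm\"uller lifts to land in a formal multiplicative group, and apply~\cite[Theorem~1.7]{EK19}. The only cosmetic difference is that the paper phrases the eigenvalue passage as the finite surjective symmetrization map $\mathrm{poly}\colon \Def_{\underline\mu}\to \Def_{\underline p}$ between formal completions rather than as adjoining square roots of discriminants, which cleanly sidesteps your worry about repeated residual eigenvalues; the relation $b_0b_1b_\infty=1$ you mention is true but not used.
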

The only reason why we assume $\ell>2$ in Theorem~\ref{thmA} is that de
Jong's conjecture \cite[Conjecture~2.3]{deJ01} is  known only under this assumption at the moment, see~\cite[\S~1.4]{Gai07}.
In fact our proof of Theorem~\ref{thmB} yields a geometric proof of de Jong's conjecture on $\P^1_k\setminus \{0,1, \infty\}$ in rank $2$ for any $\ell$, without any use of the
Langlands  program, see Section~\ref{ss:dJ}.

\section{The deformation space  of pseudorepresentations}\label{sec:defrings}

In this section we recall some properties of the deformation space of pseudorepresentations $\PD_{\bar
\rho}$ following~\cite{Che14}. The
reason why we work with pseudorepresentations is that they naturally give rise to a
parametrization of the semi-simple representations $\sS_{\bar \rho}$ defined in Section~\ref{sec:zartop}.  As in Section~\ref{sec:zartop}, $G$ is a profinite group satisfying Mazur's $\ell$-finiteness property. 

\smallskip

Let $\sC$ be the category of
complete local $\sO$-algebras $(A, \mathfrak{m}_A)$ such that $\sO\to
A/\mathfrak{m}_A$ identifies the residue fields of $A$ and $\sO$.
Following \cite[Section~3]{Che14}  we define the {\it functor of pseudodeformations} of $\bar \rho$
\ga{}{ \PD_{\bar \rho}: \sC\to {\rm Sets} \notag}
which assigns  to $A$ the set  of continuous $r$-dimensional  $A$-valued determinants $D\colon A[G]\to A$  such that $D\otimes_A \F\colon \F[G]\to \F$ is the
$\F$-valued determinant induced by~$\bar \rho$.

Recall that a determinant is given by a compatible collection of maps $D_B \colon B[G]\to
B $, where $B$ runs through all commutative $A$-algebras, see~\cite[\S~1.2]{Che14}. Every
continuous representation $\rho\colon G\to \GL_r(A)$ gives rise to a continuous determinant
${\rm Det}(\rho )\colon A[G]\to A$. We  define the
{\em coefficients of the characteristic polynomial} of $D$ as the maps $\Lambda_i \colon G
\to A$ determined by the formula
\[
  D_{A[t]}(t-[g])=  \sum_{i=0}^r (-1)^i \Lambda_i(g) t^{d-i},
\]
see~\cite[\S~1.3]{Che14}. Recall that from the $\Lambda_i$ we can reconstruct the whole
determinant $D$ by means of Amitsur's formula~\cite[\S~1.3]{Che14}.

With a slight abuse of notation we define $\PD_{\bar\rho}(\bar \Z_\ell)$ to be the set of
$r$-dimensional  $\bar\Z_\ell$-valued determinants $D\colon \bar\Z_\ell [G]\to
\bar\Z_\ell$  with  $D\otimes_{\bar\Z_\ell} \bar\F_\ell ={\rm Det} (\bar
  \rho)\otimes_{\F} \bar\F $, which are induced by base change from $\sO'$ to $\bar \Z_\ell$ by $\sO'$-valued continuous determinants $D^{\sO'}: \sO' [G] \to
\sO'$,
where $\sO'\subset\bar\Z_\ell$ runs over all finite   extensions of $\sO$.

\begin{prop}\label{prop:reppseudo}
  \mbox{}
\begin{itemize}
\item[\rm (1)]   The functor $\PD_{\bar \rho}$ is representable  by
$R_{\bar \rho}^P \in \sC$ with universal determinant  $D^{R_{\bar \rho}^P}\colon R_{\bar \rho}^P[G] \to R_{\bar \rho}^P$.
\item[\rm (2)]
  The complete local ring $R^P_{\bar \rho} $ is Noetherian
 and  topologically generated as an $\sO$-algebra by the  finitely many elements
   $\Lambda_j(g_i)$
where $1\le j\le r$ and $g_1,\ldots , g_m\in G$ is a suitable family.
\item[\rm (3)]  If $\bar\rho$ is absolutely irreducible,
$R_{\bar \rho}^P$ coincides with Mazur's universal deformation ring and
$D^{R_{\bar
    \rho}^P}$ is the determinant of the universal deformation.
    \end{itemize}
\end{prop}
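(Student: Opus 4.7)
The plan is to derive all three parts essentially from Chenevier's theory of continuous determinants, adding continuity and the $\ell$-finiteness hypothesis where they are needed.

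For part~(1), I would first forget continuity and recall from \cite[\S 1.17, \S 3]{Che14} that the purely algebraic functor sending $A \in \sC$ to the set of $r$-dimensional $A$-valued determinants $A[G] \to A$ lifting $\mathrm{Det}(\bar\rho)$ is pro-representable by a complete local $\sO$-algebra, constructed as the completion at $\bar\rho$ of a universal ring built from the abstract group algebra. The point is that by Amitsur's formula a determinant is determined by the coefficient maps $\Lambda_i : G \to A$, and the conditions that a family $(\Lambda_i)_{i=1,\dots,r}$ come from a determinant are polynomial relations. To get the continuous version, I would impose continuity on the $\Lambda_i$ in the $\mathfrak m_A$-adic topology and take as representing object the quotient of Chenevier's pro-representing ring by the closed ideal generated by the differences $\Lambda_i(g) - \Lambda_i(g')$ for $g,g'$ in the same coset of a sufficiently small open subgroup, together with the universal continuity conditions. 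The universal determinant $D^{R^P_{\bar\rho}}$ is then the image of Chenevier's universal determinant.

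For part~(2), the central step is to prove that the mod-$\mathfrak m$ cotangent space of $R^P_{\bar\rho}$ is finite-dimensional; Noetherianness and finite topological generation by the $\Lambda_j(g_i)$ then follow from standard complete-local-ring arguments (lift generators of the cotangent space and apply the complete Nakayama lemma, as in Schlessinger). To bound the cotangent space, I would use Chenevier's computation~\cite[Proposition 3.7, \S 3.15]{Che14} which expresses the tangent space $\PD_{\bar\rho}(\F[\varepsilon])$ in terms of $\mathrm{Ext}^1$ groups between the Jordan--H\"older factors of $\bar\rho^{\mathrm{ss}}$. Each such $\mathrm{Ext}^1$ group is a subquotient of $H^1(U,\mathrm{ad}(\bar\rho^{\mathrm{ss}}))$ for $U$ an open subgroup trivializing $\bar\rho$, which is finite by Mazur's $\ell$-finiteness property applied to $U$; hence the tangent space is finite. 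A suitable family $g_1,\dots,g_m$ is then obtained by lifting a finite set of cotangent vectors to elements of the form $\Lambda_j(g_i)$, which is possible because the $\Lambda_j(g)$ with $g$ ranging over $G$ topologically generate $R^P_{\bar\rho}$ over $\sO$ by construction.

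For part~(3), the absolutely irreducible case, I would invoke the classical fact (Carayol--Serre, Nyssen, Rouquier, and~\cite[Theorem~2.22]{Che14}) that over a henselian local ring whose residue representation associated to a determinant is absolutely irreducible, the determinant arises from an actual representation, unique up to conjugation. Applied to $R^P_{\bar\rho}$ and its Artinian quotients this identifies $\PD_{\bar\rho}(A)$ with the set of conjugacy classes of continuous lifts $G \to \GL_r(A)$ of $\bar\rho$, i.e., with Mazur's deformation functor, and the universal determinant is then the determinant of the universal representation. The main obstacle I anticipate is the bookkeeping for part~(2): one has to be careful that Chenevier's tangent space description, which is stated for semi-simple residual representations, transports correctly through the continuity and $\ell$-finiteness reductions; this is where one must genuinely use that $G$ satisfies Mazur's condition rather than just profiniteness.
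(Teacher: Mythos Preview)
Your proposal is correct and follows the same route as the paper: the paper's ``proof'' consists entirely of three citations to Chenevier~\cite{Che14} (Proposition~7.59 for~(1), Remark~7.61 for~(2), Example~7.60 for~(3)), and what you have written is essentially an outline of those arguments. You have supplied more detail than the paper does, but there is no divergence in method.
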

We refer to \cite[Proposition~7.59]{Che14} for part~(1), to  \cite[Remark~7.61]{Che14} for part (2),
to  \cite[Example~7.60]{Che14} for part (3).
 Recall  that Mazur's  deformation functor   $\sC \to {\rm Sets}$
assigns to $A\in \sC$ the set of isomorphism classes of continuous representations
$\rho\colon G\to \GL_r(A)$ such that $\rho\otimes_A \F$ is isomorphic to $\bar \rho$, see
for example~\cite[Section~3]{Til96}.
Note that for any $\bar \rho$  we have by definition
\ga{}{ R^P_{\bar \rho} =R^P_{\bar \rho ^{\rm ss}} \notag}
 where
$^{\rm ss}$ indicates the semi-simplification.
We define the {\it universal  deformation space of pseudorepresentations} of $\bar \rho$ by
\ga{}{ \PD_{\bar \rho}= {\rm Spf} R^P_{\bar \rho}. \notag }

\begin{rmk}\label{rmk:pseudobc}
The construction of the universal deformation ring $ R_{\bar \rho}^P $ is compatible with any
finite base change of local rings $\sO\subset \sO'$,  {\it i.e.}\ the universal deformation space of
pseudorepresentations over $\sO'$  with residue field $\F'\subset \bar \F$ a finite
extension of $\F$ and residual condition  $D\otimes_{\sO'} \F'= {\rm Det}(\bar
  \rho)\otimes_{\F} \F'$
is given by  $ R_{\bar \rho}^P \otimes_\sO \sO'$, see~\cite[Proposition~7.59]{Che14}.
  In particular the canonical map
\[
  \Hom_{\sO} (R^P_{\bar \rho} , \bar \Z_\ell )\xrightarrow{\sim} \PD_{\bar\rho} (\bar \Z_\ell)
\]
is bijective.
\end{rmk}

\begin{prop}\label{prop:spseudo}
 Sending a continuous representation $\rho\colon G\to \GL_r(\bar \Q_\ell)$ to its
determinant induces a bijection
\[
{\rm Det} \colon  \sS_{\bar \rho} \xrightarrow{\sim} \PD_{\bar\rho}(\bar \Z_\ell).
\]
\end{prop}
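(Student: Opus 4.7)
\medskip

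The plan is to read off the bijection from Chenevier's theory of determinants \cite{Che14}, checking separately that the map is well-defined, injective, and surjective. The core inputs are Chenevier's theorem that a determinant over an algebraically closed field of characteristic zero is the determinant of a unique semi-simple representation, together with the fact that the determinant of a representation depends only on its semi-simplification.

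\emph{Well-definedness.} Let $\rho\colon G\to\GL_r(\bar\Q_\ell)$ be a continuous semi-simple representation whose semi-simple residual representation is isomorphic to $\bar\rho$. By compactness of $G$ and continuity, the image of $\rho$ is contained in $\GL_r(\sO')$ for some finite extension $\sO\subset\sO'\subset \bar\Z_\ell$; hence the coefficients of the characteristic polynomials lie in $\sO'$, and ${\rm Det}(\rho)$ is naturally a continuous $\sO'$-valued determinant. Base change gives a continuous $\bar\Z_\ell$-valued determinant. Since determinants factor through the semi-simplification of a representation (cf.\ \cite[\S1.2]{Che14}), the mod-$\mathfrak{m}$ reduction of ${\rm Det}(\rho)$ equals ${\rm Det}(\bar\rho^{\rm ss}) = {\rm Det}(\bar\rho)$, so the output lies in $\PD_{\bar\rho}(\bar\Z_\ell)$. \emph{Injectivity.} If $\rho_1,\rho_2\in\sS_{\bar\rho}$ satisfy ${\rm Det}(\rho_1)={\rm Det}(\rho_2)$, then both are semi-simple representations of $G$ valued in $\GL_r(\bar\Q_\ell)$ with the same determinant, so Chenevier's classification of semi-simple representations by their determinants over an algebraically closed field \cite[Theorem~2.12]{Che14} yields $\rho_1\cong\rho_2$.

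\emph{Surjectivity.} Given $D\in\PD_{\bar\rho}(\bar\Z_\ell)$, by definition $D=D^{\sO'}\otimes_{\sO'}\bar\Z_\ell$ for some continuous $\sO'$-valued determinant. Applying Chenevier's existence theorem for determinants over the algebraically closed field $\bar\Q_\ell$ \cite[Theorem~A]{Che14} to $D\otimes_{\bar\Z_\ell}\bar\Q_\ell$, we obtain a semi-simple representation $\rho\colon G\to\GL_r(\bar\Q_\ell)$, unique up to isomorphism, with ${\rm Det}(\rho)=D\otimes_{\bar\Z_\ell}\bar\Q_\ell$. Continuity of $\rho$ is inherited from continuity of $D$: the open subgroups of $G$ on which the coefficients of the characteristic polynomial are sufficiently close to those of the identity determine a neighbourhood basis of $\id$ in the image of $\rho$, again using the uniqueness part of Chenevier's theorem applied over those subgroups. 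Finally, the mod-$\mathfrak{m}$ reduction of an $\sO''$-integral model of $\rho$ is a representation whose determinant equals ${\rm Det}(\bar\rho)$, so by the injectivity statement already established (over $\bar\F$) its semi-simplification is isomorphic to $\bar\rho^{\rm ss}$; thus $\rho\in\sS_{\bar\rho}$ maps to $D$.

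The main obstacle is the surjectivity step: the existence of a semi-simple representation with prescribed determinant is a non-trivial output of Chenevier's theory of determinants that is not purely formal, and one must also upgrade the abstract existence to a \emph{continuous} representation, which requires using continuity of $D$ together with Chenevier's uniqueness statement. Once both parts of Chenevier's main result are invoked, well-definedness and injectivity are essentially formal.
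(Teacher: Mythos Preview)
Your approach is essentially the same as the paper's: both rest on Chenevier's Theorem~A giving a bijection between semi-simple $\bar\Q_\ell$-representations and $\bar\Q_\ell$-valued determinants, and then both must match up the continuity/integrality conditions on each side.

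The one point where your argument is weaker than the paper's is the continuity step in surjectivity. Your sentence about ``open subgroups on which the coefficients of the characteristic polynomial are sufficiently close to those of the identity determine a neighbourhood basis of $\id$ in the image of $\rho$'' is not a proof: closeness of characteristic-polynomial coefficients to those of the identity does not by itself give a neighbourhood of the identity in $\GL_r$, and invoking Chenevier's uniqueness ``over those subgroups'' does not close the gap. The paper handles this cleanly by quoting a single fact: a semi-simple representation $\rho\colon G\to\GL_r(\bar\Q_\ell)$ is continuous if and only if its character $\mathrm{tr}\circ\rho$ takes values in a finite extension of $\mathrm{Frac}(\sO)$ and is continuous there. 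Since the trace is one of the $\Lambda_i$ of the determinant, continuity and integrality of $D$ immediately give continuity and integrality of the trace, and the cited fact finishes. You should replace your neighbourhood-basis sketch with this characterization (or prove it); the rest of your argument is fine.
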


\begin{proof}
By~\cite[Theorem~A]{Che14} there is a bijection between the isomorphism classes of not necessarily continuous
semi-simple representations $\rho\colon G\to\GL_r(\bar \Q_\ell)$ and the not  necessarily continuous
determinants $D\colon \bar \Q_\ell[G] \to  \bar \Q_\ell $.
  We combine this with the simple fact from representation theory that  a semi-simple representation $\rho\colon G\to\GL_r(\bar \Q_\ell)$ is
continuous precisely when its character ${\rm tr}\circ \rho$ has image in a finite
extension of ${\rm Frac}( \sO )$ inside $\bar\Q_\ell$ and is continuous.
\end{proof}

Combining Remark~\ref{rmk:pseudobc} and Proposition~\ref{prop:spseudo} we obtain a canonical
identification
\ga{eq.semsim}{
\sS_{\bar\rho} \xrightarrow{\sim} \Spm ( R^P_{\bar \rho}\otimes_{\sO} \bar\Q_\ell ).
}
We shall see in the next section that~\eqref{eq.semsim} induces the same Zariski topology on
$\sS_{\bar\rho}$ as the one defined in Section~\ref{sec:zartop}.

\section{Characteristic polynomials}\label{sec:charpoly}
In this section $G$ is a profinite group satisfying Mazur's $\ell$-finiteness property.
Recall that for a family  $\underline p= (p_1, \ldots , p_m)$   of $m$ monic  polynomials $p_i$  of degree $r$
over the finite field $\F$, we introduced the complete local deformation ring $R_{\underline p}$ in
Section~\ref{sec:zartop}. When $p_i$ is the  characteristic
polynomial of the matrix $\bar\rho(g_i)$ for a representation
$\bar\rho\colon G\to \GL_r(\F) $ and for $i=1, \ldots, m$,  we also write  $R_{\bar \rho(\underline{g})}$ for $R_{\underline p}$.
Let
\ga{}{ \Def_{\bar \rho(\underline{g})}={\rm Spf} R_{\bar \rho(\underline{g})} \notag}
be the corresponding formal scheme over $\sO$.
We obtain a canonical morphism of formal schemes
\ga{}{ \PD_{\bar \rho}  \xrightarrow{\Ch_{ \underline g}} \Def_{\bar \rho(\underline{g})}\notag}
which sends a pseudorepresentation to the family of associated characteristic polynomials
of $g_1 , \ldots , g_m$.   In view of the identification~\eqref{eq.semsim}
   this induces the map   $\Ch_{ \underline g}\colon  \sS_{\bar
    \rho}\to \Spm (R_{\bar
  \rho(\underline{g})} \otimes_\sO \bar \Q_\ell ) $
from Section~\ref{sec:zartop}.

\begin{lem} \label{lem:fin}  There is a finite family $\underline{\tilde g}\in G^{\tilde m}$
such that for any  finite family $\underline g\in G^m$ containing $\underline{\tilde g}$,
the morphism
 \ga{}{ \Ch_{\underline g} \colon\PD_{\bar \rho}  \to \Def_{\bar \rho(\underline{g})} \notag}  is a closed immersion.
 \end{lem}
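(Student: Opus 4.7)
The morphism $\Ch_{\underline g}$ is a closed immersion of formal schemes if and only if the corresponding continuous local $\sO$-algebra homomorphism $\varphi_{\underline g}\colon R_{\bar \rho(\underline{g})} \to R^P_{\bar \rho}$ is surjective. So the plan is to pick $\underline{\tilde g}$ so that $\varphi_{\underline{\tilde g}}$ is surjective, and then reduce the general case to this one.

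First I would make the rings and the map explicit. Because $\A^{rm}_{\sO}$ is smooth and $\sO$ has residue field $\F$, the complete local ring $R_{\bar \rho(\underline{g})}$ is a formal power series ring $\sO[[Y_{ij}]]$ on $rm$ variables, where $Y_{ij}$ is the $j$-th coefficient of the characteristic polynomial of the $i$-th matrix, shifted so as to lie in the maximal ideal ($1\le i\le m$, $1\le j\le r$). From the construction of $\Ch_{\underline g}$ on pseudodeformations via Amitsur's formula, the ring map $\varphi_{\underline g}$ carries $Y_{ij}$ to $\Lambda_j(g_i)$ minus its residue class in $\F$, where the $\Lambda_j$ are the characteristic-polynomial coefficients of the universal determinant $D^{R^P_{\bar \rho}}$ on $R^P_{\bar \rho}$.

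Next, I would invoke Proposition~\ref{prop:reppseudo}(2) to extract a finite family $\underline{\tilde g}=(\tilde g_1,\ldots,\tilde g_{\tilde m})\in G^{\tilde m}$ such that the elements $\Lambda_j(\tilde g_i)$ for $1\le i\le \tilde m$, $1\le j\le r$ topologically generate $R^P_{\bar \rho}$ as a continuous $\sO$-algebra. By the standard presentation of complete local Noetherian rings, this is exactly the assertion that the continuous $\sO$-algebra map $\sO[[X_{ij}]]\to R^P_{\bar \rho}$, sending $X_{ij}$ to the shifted $\Lambda_j(\tilde g_i)$, is surjective. Comparing with the previous step, the source is canonically identified with $R_{\bar \rho(\underline{\tilde g})}$ and the map in question is precisely $\varphi_{\underline{\tilde g}}$; hence $\varphi_{\underline{\tilde g}}$ is surjective.

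Finally, for any $\underline g\in G^m$ containing $\underline{\tilde g}$, inclusion of coordinate subsets yields a local $\sO$-algebra map $R_{\bar \rho(\underline{\tilde g})}\to R_{\bar \rho(\underline g)}$ whose composition with $\varphi_{\underline g}$ is $\varphi_{\underline{\tilde g}}$, by tracking the images of the $Y_{ij}$. Since the composite is surjective, so is $\varphi_{\underline g}$, and $\Ch_{\underline g}$ is a closed immersion. The only delicate point is the passage from \emph{topological} generation in Proposition~\ref{prop:reppseudo}(2) to honest surjectivity of $\varphi_{\underline{\tilde g}}$; this is avoided cleanly because the number of generators matches the number of variables in $R_{\bar \rho(\underline{\tilde g})}$ exactly, so the canonical continuous surjection $\sO[[X_{ij}]]\twoheadrightarrow R^P_{\bar\rho}$ provided by topological generation \emph{is} the map $\varphi_{\underline{\tilde g}}$, and no separate ``dense image is closed'' argument is required.
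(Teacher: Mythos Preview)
Your proof is correct and follows exactly the approach of the paper, which simply says the lemma is an immediate consequence of Proposition~\ref{prop:reppseudo}(2); you have just unpacked the details of that implication (identifying $R_{\bar\rho(\underline g)}$ with a power series ring, matching $\varphi_{\underline g}$ with the topological-generation surjection, and factoring through the subfamily).
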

\begin{proof}
This is an immediate consequence of
Proposition~\ref{prop:reppseudo}(2).
\end{proof}

\begin{proof}[Proof of Proposition~\ref{prop.Zartop}]
Take $\underline{\tilde g}$ as in Lemma~\ref{lem:fin} and use the identification~\eqref{eq.semsim}.
\end{proof}

\smallskip

Assume given roots  $\underline\mu =(\mu^{(j)}_i)_{\mbox{\scriptsize$\substack{1\le j\le m \\ 1\le i\le r}$}} \in
\mathbb A^{rm}_{\sO}(\F)$ of the $m$ monic polynomials $\underline p$ of degree $r$, we let $\Def_{\underline \mu} $ be the
formal completion of the affine space $\mathbb A^{rm}_{\sO}$ at $\underline\mu$. Sending a point
\[
(\lambda^{(j)}_i)_{i,j} \in \Def_{\underline\mu}(A) \ \text{ to }  \    \left(
(t-\lambda^{(1)}_1) \cdots (t-\lambda^{(1)}_r) , \ldots \right) \in \Def_{\underline p}(A)
\]
induces a finite ``symmetrization'' morphism of $\sO$-formal schemes
\ga{}{ {\rm poly}\colon \Def_{\underline\mu}  \to \Def_{\underline p} . \notag}
For an integer $n$ we write $\underline \mu^n $ for $ ((\mu^{(j)}_i)^n )_{i,j}$. Sending
\[
\underline \lambda  \in \Def_{\underline\mu}(A) \ \text{ to }  \
\underline \lambda^n \in \Def_{\underline \mu^n}(A)
\]
induces a finite morphism of   $\sO$-formal schemes
\[
[n]\colon\Def_{\underline \mu}\to \Def_{\underline\mu^n}.
\]
There exists a unique lower horizontal morphism $[n]$ of $\sO$-formal schemes making the square
\[
  \xymatrix{
    \Def_{\underline \mu} \ar[r]^{[n]} \ar[d]_-{\rm poly}  &  \Def_{\underline\mu^n} \ar[d]^-{\rm poly}\\
   \Def_{\underline p}  \ar[r]_{[n]} &  \Def_{ [n]\underline p}
  }
\]
commutative, where $[n]\underline p$ has the obvious meaning, that is its $j^\mathrm{th}$ component is defined to be
\[(t- (\mu^{(j)}_1)^n) \cdots (t- (\mu^{(j)}_r)^n).\]

\medskip

We now study the compatibility of the universal deformation space of pseudodeformations
with restriction.
Let $U\subset G$ be an open subgroup, $\bar \rho|_U\colon U\to \GL_r(\F)$ be the restriction of $\rho$ to $U$. It induces a morphism
\ga{}{ {\rm rest}\colon {\rm PD}_{\bar \rho} \to {\rm PD}_{\bar \rho|_U} \notag}
by sending $D\colon A[G]\to A$ to its restriction $D|_U\colon A[U]\to A$.

\begin{lem} \label{lem:rest}
The morphism ${\rm rest}$ is finite.
\end{lem}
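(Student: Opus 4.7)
The plan is to show that the induced ring map $R^P_{\bar\rho|_U}\to R^P_{\bar\rho}$ makes $R^P_{\bar\rho}$ finite as an $R^P_{\bar\rho|_U}$-module, by leveraging the characteristic polynomial embeddings of Section~\ref{sec:charpoly}. The key observation is that, since $[G:U]<\infty$, one can pick a common integer $n$ so that $g^n\in U$ for all $g$ in any fixed finite collection; on characteristic polynomials the map $g\mapsto g^n$ is implemented by the $[n]$-morphism of Section~\ref{sec:charpoly}.

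First I would use Lemma~\ref{lem:fin} to fix a finite family $\underline{\tilde g}=(g_1,\dots,g_m)\in G^m$ for which $\Ch_{\underline{\tilde g}}\colon \PD_{\bar\rho}\to\Def_{\bar\rho(\underline{\tilde g})}$ is a closed immersion, and choose an integer $n\ge 1$ with $g_i^n\in U$ for all $i$. Writing $\underline{\tilde g}^n=(g_1^n,\dots,g_m^n)$, I would assemble the square of $\sO$-formal schemes
\[
\xymatrix{
\PD_{\bar\rho} \ar[r]^-{\Ch_{\underline{\tilde g}}} \ar[d]_{{\rm rest}} & \Def_{\bar\rho(\underline{\tilde g})} \ar[d]^-{[n]} \\
\PD_{\bar\rho|_U} \ar[r]^-{\Ch_{\underline{\tilde g}^n}} & \Def_{\bar\rho(\underline{\tilde g}^n)}
}
\]
which commutes because the characteristic polynomial of $\rho(g^n)$ has roots equal to the $n$-th powers of the roots of the characteristic polynomial of $\rho(g)$.

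The main technical step will be to verify that the right vertical $[n]$ is finite. From the commutative square of Section~\ref{sec:charpoly} the upper $[n]\colon \Def_{\underline\mu}\to\Def_{\underline\mu^n}$ is stated to be finite, and each ``poly'' morphism is the formal completion of the finite surjection $\A^{rm}_\sO\to\A^{rm}_\sO$ sending roots to elementary symmetric functions, hence is itself finite. Composing yields that $R_{\underline\mu}$ is finite over $R_{[n]\underline p}$; since $R_{\underline p}\hookrightarrow R_{\underline\mu}$ embeds as a sub-$R_{[n]\underline p}$-module of a finite module over the Noetherian ring $R_{[n]\underline p}$, the ring $R_{\underline p}$ is itself finite over $R_{[n]\underline p}$, as required.

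To conclude, the composition $\PD_{\bar\rho}\to\Def_{\bar\rho(\underline{\tilde g}^n)}$ obtained by going first right and then down is a closed immersion followed by a finite morphism, hence finite; by commutativity of the square it also equals $\Ch_{\underline{\tilde g}^n}\circ{\rm rest}$. On rings this factors as $R_{\bar\rho(\underline{\tilde g}^n)}\to R^P_{\bar\rho|_U}\to R^P_{\bar\rho}$ and exhibits $R^P_{\bar\rho}$ as a finite module over $R_{\bar\rho(\underline{\tilde g}^n)}$; any such generating set is automatically a generating set over $R^P_{\bar\rho|_U}$, giving the desired finiteness of ${\rm rest}$. The only point I expect to require care is the finiteness of $[n]$ on $\Def_{\underline p}$: it does not follow formally from the quotient ``poly'' construction, and relies on the Noetherian submodule argument above.
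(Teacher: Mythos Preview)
Your proposal is correct and follows essentially the same route as the paper: both set up the same commutative square with $\Ch_{\underline g}$ a closed immersion and $[n]$ finite, and deduce finiteness of ${\rm rest}$ from the factorization on rings. The only difference is that you supply an explicit Noetherian-submodule argument for the finiteness of $[n]\colon \Def_{\underline p}\to\Def_{[n]\underline p}$, which the paper simply asserts.
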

\begin{proof}
Fix a family $\underline{g}$ in $G$ as in Lemma~\ref{lem:fin}. Choose an integer $n>0$ such that $g_i^n\in U$ for all the $g_i$ of the family and denote by $\underline{g}^n$ the family  $(g_1^n,\ldots, g_m^n)$.
We have a commutative diagram
\ga{}{\xymatrix{ \ar[d]_{\rm rest} {\rm PD}_{\bar \rho}  \ar[r]^{ \Ch_{ \underline{g}  }  } & {\rm D}_{\bar \rho({\underline{g})}} \ar[d]^{[n]}\\
{\rm PD}_{\bar \rho|_U}  \ar[r]_{\Ch_{\underline g^n}} & {\rm D}_{\bar \rho({\underline{g^n})}}
}\notag
}
in which the upper horizontal arrow ${\rm char}_{ \underline{g} }$ is a closed immersion
and the right vertical arrow $[n]$ is finite. This implies that   the left vertical arrow ${\rm rest}$ is finite as well.
\end{proof}

 It is likely that the notion of  induction for pseudorepresentations with respect to an open subgroup $U\subset G$
can be defined and induces  a
finite morphism of universal deformation spaces of pseudorepresentations.
Unfortunately,  this is not documented in the literature.

We now describe  a weak form of induction  which is  sufficient for our purpose.
 For simplicity assume that $U\subset G$ is a  normal subgroup of index $n$.
Let $\bar\rho_U\colon U\to \GL_r(\F )$ be a continuous representation and set $\bar\rho =
\Ind^G_U \bar\rho_U $. Let $\bar\rho|_U\colon U\to \GL_{nr}(\F ) $ be the restriction of
$\bar\rho$ to $U$.  We have
\[
\bar\rho|_U = \bigoplus_{i=1}^n (\bar\rho_U)^{s_i}
\]
where $\Sigma:=\{s_1, \ldots, s_n\} \subset G$ is a set of representatives of $G/U$, and
\ga{}{ (\bar\rho_U)^{s_i} (u)=\bar \rho_U(s_ius_i^{-1}) \ {\rm for} \ u\in U. \notag}
Similarly,
sending a pseudorepresentation $D\colon A[U]\to A$ to the pseudorepresentation
$\oplus_{i=1}^n  D^{s_i}\colon A[U] \to A$ where $D^{s_i}(u)=D(s_ius_i^{-1})$  defines the horizontal morphism $\xi$ in the triangle
\ga{eq.indcompa}{
  \begin{split}
  \xymatrix{
    \PD_{\bar\rho_U}  \ar@{-->}[d]_?\ar[r]^{\xi} & \PD_{\bar\rho|_U} \\
     \PD_{\bar\rho} \ar[ru]_{\rm rest} &
  }
  \end{split}
}
We expect that  \eqref{eq.indcompa} can be extended to a commutative diagram by the indicated dashed
induction arrow.  As $\xi$ is finite by Lemma~\ref{lem:indfinite},   the dashed arrow, if it
exists, is automatically finite.

\begin{lem}\label{lem:indfinite}
The morphism $\xi$ in diagram~\eqref{eq.indcompa} is finite.
\end{lem}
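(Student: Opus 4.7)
The plan is to imitate the proof of Lemma~\ref{lem:rest} and sandwich $\xi$ between two characteristic-polynomial closed immersions, with the bottom side given by a polynomial-multiplication morphism of deformation spaces whose finiteness will be the main technical point.

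First I would apply Lemma~\ref{lem:fin} to $\bar\rho|_U$ and choose a family $\underline{v}=(v_1,\ldots,v_m) \in U^m$ for which $\Ch_{\underline{v}}\colon \PD_{\bar\rho|_U}\to \Def_{\bar\rho|_U(\underline v)}$ is a closed immersion. Since $U$ is normal, the $\Sigma$-saturated family
\[
\underline{u}=(s_jv_is_j^{-1})_{1\le i\le m,\ 1\le j\le n}\in U^{mn}
\]
also lies in $U$; after enlarging $\underline v$ if necessary (which preserves the closed-immersion property of $\Ch_{\underline v}$ by Lemma~\ref{lem:fin}), I may further assume that $\underline u$ fulfils the conclusion of Lemma~\ref{lem:fin} for $\bar\rho_U$, so that $\Ch_{\underline u}\colon \PD_{\bar\rho_U}\to \Def_{\bar\rho_U(\underline u)}$ is a closed immersion too. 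The identity $\Ch(\bar\rho|_U(v))=\prod_{j=1}^n \Ch(\bar\rho_U(s_jvs_j^{-1}))$ for $v\in U$ then defines a polynomial-multiplication morphism $\mathrm{prod}\colon \Def_{\bar\rho_U(\underline u)}\to \Def_{\bar\rho|_U(\underline v)}$ sending, in the $i$-th block, the $n$ degree-$r$ characteristic polynomials attached to $s_1v_is_1^{-1},\ldots,s_nv_is_n^{-1}$ to their degree-$nr$ product. The resulting square
\[
\xymatrix{
\PD_{\bar\rho_U} \ar[r]^{\xi} \ar[d]_{\Ch_{\underline u}} & \PD_{\bar\rho|_U} \ar[d]^{\Ch_{\underline v}} \\
\Def_{\bar\rho_U(\underline u)} \ar[r]_{\mathrm{prod}} & \Def_{\bar\rho|_U(\underline v)}
}
\]
commutes by construction.

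The main step is to prove $\mathrm{prod}$ is finite. It is an $m$-fold direct product of the completion of the polynomial-multiplication morphism $\mathbb A^{nr}_\sO\to \mathbb A^{nr}_\sO$, $(p_1,\ldots,p_n)\mapsto p_1\cdots p_n$, at the relevant $\F$-points, so it suffices to prove this single-block map is finite. Introducing formal roots $\mu_{j,k}$ of the factor polynomials, the coordinate ring of the source identifies with the ring of $S_r^n$-invariants of $\sO[\mu_{j,k}]$ and that of the target with the ring of $S_{nr}$-invariants; since $S_r^n \subset S_{nr}$ and $\sO[\mu_{j,k}]$ is a finite module over its $S_{nr}$-invariants (Hilbert--Noether), the intermediate $S_r^n$-invariants are finite over the $S_{nr}$-invariants as well. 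Passing to completions at the chosen closed points preserves this finiteness, as it amounts to base change of a finite map followed by extraction of a direct factor of the resulting semilocal ring.

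With $\mathrm{prod}$ finite, the composite $\Ch_{\underline v}\circ \xi = \mathrm{prod}\circ \Ch_{\underline u}$ is finite. Because $\Ch_{\underline v}$ is a closed immersion, corresponding on rings to a surjection $R_{\bar\rho|_U(\underline v)}\twoheadrightarrow R^P_{\bar\rho|_U}$, any finite set of $R_{\bar\rho|_U(\underline v)}$-module generators of $R^P_{\bar\rho_U}$ is automatically a set of $R^P_{\bar\rho|_U}$-module generators through $\xi$, whence $\xi$ is finite. I expect the main obstacle to be establishing the finiteness of $\mathrm{prod}$: a standard invariant-theoretic statement, but one that must be handled carefully in the formal-scheme setting over $\sO$, with attention to the completion step and to the simultaneous choice of the two families $\underline v$ and $\underline u$.
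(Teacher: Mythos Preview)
Your proposal is correct and follows essentially the same route as the paper: the paper too sandwiches $\xi$ in a square with characteristic-polynomial maps and a polynomial-multiplication morphism, realizing the latter explicitly as the finite quotient map $\Def_{\underline\mu}/\prod_{a,j}\Sigma_r \to \Def_{\underline\mu}/\prod_j\Sigma_{rn}$ coming from the inclusion $\prod_a\Sigma_r\subset \Sigma_{rn}$, which is exactly your $\mathrm{prod}$ and your invariant-theoretic argument. The only cosmetic differences are that the paper starts by choosing the family for $\bar\rho_U$ rather than for $\bar\rho|_U$, and that it does not require the bottom $\Ch$-map to be a closed immersion (your final deduction works without that hypothesis anyway, since a finite $A$-module is automatically a finite $B$-module for any factorization $A\to B\to C$).
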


\begin{proof}
 Let $\underline{u}=(u_1,\ldots, u_m), \ u_i\in U$ be a family as in  Lemma~\ref{lem:fin}  for $\bar \rho_U$.
  We consider the family
\[\underline{u}^{\Sigma} =  \left(s_1u_1s_1^{-1},\ldots, s_1u_ms_1^{-1}, s_2u_1s_2^{-1}
 ,\ldots, s_nu_ms_n^{-1} \right) \]
 and the associated roots $\underline \mu$ of the characteristic polynomials of the matrices $\bar\rho
 (\underline u^\Sigma)$.
 We have the diagram
 \ga{}{ \xymatrix{\ar[d]_{\xi} {\rm PD}_{\bar \rho_U} \ar[r]^-{{\rm char}_{\underline{u}^\Sigma}}  &\ar@{-->}[d] {\rm D}_{ \bar \rho_U (\underline{u}^\Sigma)} & \ar[l]_-{\rm poly}  {\rm D}_{\underline \mu} \ar[d]^{\rm id} \\
 {\rm PD}_{\bar \rho|_U}
\ar[r]_-{{\rm char}_{\underline{u}}}   & {\rm D}_{ \bar \rho|_U (\underline{u})}  &
\ar[l]^-{\rm poly} {\rm D}_{\underline \mu} }\notag}
where  we now define the dashed arrow  as the quotient of the identity map ${\rm id}$ as follows.  We label the $\lambda$ coordinates of ${\rm D}_{ \underline{\mu}}$
\ga{}{\left(  \lambda_{i}^{(j)}( a) \ | \ \ a=1,\ldots, n,\ j=1,\ldots, m, \ i=1, \ldots, r \right). \notag}
 The upper poly map is  defined by the elementary symmetric functions on $r$ letters
$ \left( {\rm sym}\left(   \lambda_{i}^{(j)} (a)\right)_{1\le i\le r}  \right)_{a,j}$.  In other words, it is the quotient by the product $\prod_{a,j} \Sigma_r$ where $\Sigma_r$ is the symmetric group in $r$ letters.
The lower ${\rm poly}$ map is defined by
$ \left( {\rm sym}\left(\lambda_{i}^{(j)} (a)\right)_{1\le a\le n,  1\le i\le r}
\right)_{j}$ where ${\rm sym}$ are the  elementary symmetric functions on $rn$ letters. In other words, it is the quotient by the product $\prod_{j} \Sigma_{rn}$.   The embedding $ \prod_a \Sigma_r\subset \Sigma_{rn}$ induces the embedding
 $\prod_{a,j} \Sigma_r\subset \prod_{j} \Sigma_{rn}$, and thus defines the requested dashed arrow
 \ga{}{ {\rm D}_{\underline{\mu}}\ /\ \prod_{a,j} \Sigma_r \dashrightarrow  {\rm D}_{\underline{\mu}}\ /\ \prod_{j} \Sigma_{rn} \notag}
 which is finite.
 This finishes the proof.
\end{proof}

\begin{rmk}\label{rmk:induction}
When evaluated on $\bar\Z_\ell$ the diagram~\eqref{eq.indcompa} becomes commutative if we
define the  dashed arrow on $\bar \Z_\ell$-points as the induction
\[
 \PD_{\bar\rho_U}(\bar \Z_\ell) \cong \sS_{\bar\rho_U} \xrightarrow{\Ind^G_U}
 \sS_{\bar\rho} \cong  \PD_{\bar\rho}(\bar \Z_\ell)
\]
on representations.
Here the isomorphisms are coming from \eqref{eq.semsim} and induction is
understood up to semi-simplification.
\end{rmk}

We now study the case of two-dimensional pseudorepresentations. A two-dimensional   determinant $D\colon A[G]\to A$ has characteristic
polynomial
\[
 g\mapsto   t^2- \tau(g) t + \delta(g) \in A[t],
\]
here for simplicity of notation we write $\tau(g)$ for $\Lambda_1(g) $ and $\delta(g)$ for
$\Lambda_2(g) $.
For elements $g_0,g_1\in G$ we have (see~\cite[Lemma~7.7]{Che14})
\begin{align*}
  \tau(g_0 g_1) &= \tau(g_1 g_0),\\
  \delta(g_0 g_1) &= \delta(g_0) \delta(g_1) ,\\
  \tau(g_0 g_1) &= \tau(g_0) \tau(g_1) -\delta(g_0) \tau(g_0^{-1} g_1 ) .
\end{align*}
From these formulae we deduce that if  $F_2$ is the free group on $2$ elements $g_0, g_1$,
for any element $h$ in $F_2$ there exists $$F\in \mathbb Z
[X_0,X_1,X_2,Y_1,Y_2,Y_1^{-1},Y_2^{-1}]$$
with $\tau(h)=F(\tau(g_0),\tau(g_1), \tau(g_0 g_1) , \delta(g_0) , \delta (g_1))$.
This proves:
\begin{lem}\label{lem:pdefdim2}
If $G$ is topologically generated by $g_0$ and $g_1$ then with
\ga{}{ \underline g=(g_0,g_1,g_0
g_1) \notag}  the morphism  $\Ch_{ \underline g} \colon \PD_{\bar \rho}  \to \Def_{\bar
  \rho(\underline{g})}$ is a closed immersion.
\end{lem}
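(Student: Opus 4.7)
The plan is to show that the closed $\sO$-subalgebra $A\subset R^P_{\bar\rho}$ topologically generated by the five elements
\[
  \tau(g_0),\ \tau(g_1),\ \tau(g_0g_1),\ \delta(g_0),\ \delta(g_1)
\]
already equals $R^P_{\bar\rho}$. Once this is established, the induced map of complete local $\sO$-algebras $R_{\bar\rho(\underline g)}\to R^P_{\bar\rho}$ (whose image contains $\Lambda_1(g_0g_1)=\tau(g_0g_1)$ and $\Lambda_2(g_0g_1)=\delta(g_0)\delta(g_1)$ as well) is surjective, hence $\Ch_{\underline g}$ is a closed immersion, as desired.

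To prove $A=R^P_{\bar\rho}$, I first note that the residual representation $\bar\rho$ has values in $\GL_2(\F)$, so the reductions of $\delta(g_0),\delta(g_1)$ are units in $\F$; thus $\delta(g_0),\delta(g_1)\in A^\times$. Now for any element $h$ of the abstract free group $F_2$ on letters $g_0,g_1$, the three identities
\begin{align*}
  \tau(g g') &= \tau(g'g), \\
  \delta(g g') &= \delta(g)\delta(g'), \\
  \tau(g g') &= \tau(g)\tau(g') - \delta(g)\tau(g^{-1}g')
\end{align*}
recalled before the lemma yield, by induction on word length, a polynomial expression
\[
  \tau(h) = F\bigl(\tau(g_0),\tau(g_1),\tau(g_0g_1),\delta(g_0),\delta(g_1)\bigr)
\]
with $F\in \Z[X_0,X_1,X_2,Y_1^{\pm1},Y_2^{\pm1}]$; hence $\tau(h)\in A$. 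Multiplicativity of $\delta$ similarly gives $\delta(h)\in A$ (it is a monomial in $\delta(g_0)^{\pm 1},\delta(g_1)^{\pm 1}$).

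Since $G$ is topologically generated by $g_0,g_1$, the image of $F_2\to G$ is dense, and the coefficient maps $\tau=\Lambda_1,\ \delta=\Lambda_2\colon G\to R^P_{\bar\rho}$ of the universal determinant are continuous for the $\mathfrak m$-adic topology. As $A$ is closed in $R^P_{\bar\rho}$, we conclude $\tau(g),\delta(g)\in A$ for every $g\in G$. Applying this to any finite family $\tilde{\underline g}\in G^{\tilde m}$ whose characteristic polynomial coefficients $\{\Lambda_j(\tilde g_i)\}$ topologically generate $R^P_{\bar\rho}$ over $\sO$ (which exists by Proposition~\ref{prop:reppseudo}(2)), we obtain $A=R^P_{\bar\rho}$. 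The only substantive step is the iteration that produces the polynomial $F$; this has been done for us in the discussion preceding the lemma, so the proof itself is essentially a bookkeeping exercise combining that formula with continuity and the multiplicativity of $\delta$.
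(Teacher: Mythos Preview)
Your proof is correct and follows essentially the same approach as the paper: the paper's argument is precisely the discussion preceding the lemma (the three identities for $\tau$ and $\delta$ and the resulting polynomial expression for $\tau(h)$ over the free group), and you have simply made explicit the continuity/density step and the invocation of Proposition~\ref{prop:reppseudo}(2) that the paper leaves implicit in the words ``This proves.''
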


\section{Compatibility with restriction and induction} \label{sec:resind}

In this section we prove some reductions and compatibilities which enable us to prove Proposition~\ref{prop:lefbelyi}.  So as there  $G$ is a profinite group satisfying Mazur's $\ell$-finiteness property.
As we are interested in the density of the fixed points of powers of an automorphism on a topological space, we formulate the simple Lemma~\ref{lem:densprop} in this context.
 For a topological space $S$ and a homeomorphism $\Phi\colon S\to S,$ we define
 \ga{}{ S^{\Phi^\infty}= \bigcup_{n>0}S^{\Phi^n} \notag}
 and  study
the following density property.

\smallskip

\begin{quote}
${\bf (D)}_{S,\Phi}$:  For any closed subset $Z\subset S$ with
  $\Phi^n(Z)=Z$ for some  integer $n >0 $   the intersection $Z\cap S^{\Phi^\infty}$ is
dense in $Z$.
\end{quote}
If $\Phi$
is clear from the context we omit it in our notation.

\smallskip

Let $\psi^{\sharp}\colon R_2\to R_1$ be a homomorphism of Noetherian Jacobson
rings. Let $\Phi_1\colon R_1\xrightarrow{\sim} R_1$ and $\Phi_2\colon
R_2\xrightarrow{\sim} R_2$ be compatible ring automorphisms. Endow $S_i=\Spm R_i$ with the
Zariski topology ($i=1,2$). Let $\psi\colon S_1\to S_2$ be the induced morphism.

\begin{lem}\label{lem:densprop}\mbox{}
  \begin{itemize}
  \item[\rm (1)] If $\psi$ is surjective then
    \[
      {\bf (D)}_{S_1,\Phi_1} \Rightarrow {\bf (D)}_{S_2,\Phi_2}.
    \]
  \item[\rm (2)] If  the ring homomorphism $\psi^{\sharp}\colon R_2\to R_1$  is finite then
    \[
      {\bf (D)}_{S_2,\Phi_2} \Rightarrow {\bf (D)}_{S_1,\Phi_1}.
    \]
  \end{itemize}
\end{lem}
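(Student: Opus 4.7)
The plan is to transport the density property across the continuous $\Phi$-equivariant map $\psi\colon S_1\to S_2$, pulling back closed sets for Part~(1) and pushing them forward for Part~(2).

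For Part~(1), given a $\Phi_2^n$-stable closed subset $Z_2\subset S_2$, I set $Z_1:=\psi^{-1}(Z_2)$. This is closed by continuity and $\Phi_1^n$-stable by the compatibility $\psi\Phi_1=\Phi_2\psi$. Applying ${\bf (D)}_{S_1,\Phi_1}$, the subset $Z_1\cap S_1^{\Phi_1^\infty}$ is dense in $Z_1$. Compatibility also yields $\psi(S_1^{\Phi_1^m})\subset S_2^{\Phi_2^m}$ for every $m$, so this dense subset maps into $Z_2\cap S_2^{\Phi_2^\infty}$. Since $\psi$ is surjective, its restriction $\psi|_{Z_1}\colon Z_1\to Z_2$ is surjective as well, so for every non-empty open $V\subset Z_2$ the preimage $\psi|_{Z_1}^{-1}(V)$ is a non-empty open of $Z_1$ that must meet the dense subset, and its image lies in $V\cap(Z_2\cap S_2^{\Phi_2^\infty})$.

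For Part~(2), I first reduce to $Z_1$ irreducible: $\Phi_1^n$ permutes the finitely many irreducible components of the Noetherian closed set $Z_1$, so after replacing $n$ by a suitable multiple each component is individually $\Phi_1^n$-stable, and density in $Z_1$ reduces to density in each component. With $Z_1$ irreducible, $Z_2:=\psi(Z_1)$ is closed in $S_2$ (finite morphisms are closed), irreducible, and $\Phi_2^n$-stable. Applying ${\bf (D)}_{S_2,\Phi_2}$, the set $T_2:=Z_2\cap S_2^{\Phi_2^\infty}$ is dense in $Z_2$. For any $y\in T_2$ with $\Phi_2^m(y)=y$, the finite fibre $\psi^{-1}(y)\cap Z_1$ is stable under $\Phi_1^m$, hence every point in it is $\Phi_1$-periodic, giving the inclusion $\psi^{-1}(T_2)\cap Z_1\subset Z_1\cap S_1^{\Phi_1^\infty}$.

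The main obstacle is then showing $\psi^{-1}(T_2)\cap Z_1$ is dense in $Z_1$, and this is where the finiteness of $\psi^{\sharp}$ is used in an essential way. I would establish the following: if $A\hookrightarrow B$ is a finite injection of integral Noetherian Jacobson domains with induced map $\pi\colon\Spm B\to\Spm A$, then for every non-zero $f\in B$ the image $\pi(D(f)\cap\Spm B)$ contains a non-empty open of $\Spm A$. Indeed, by injectivity and integrality $f$ satisfies a minimal polynomial $T^d+a_{d-1}T^{d-1}+\cdots+a_0\in A[T]$ with $a_0\neq 0$, and for any maximal ideal $\mathfrak{m}\subset A$ with $a_0\notin\mathfrak{m}$, lying-over produces a maximal ideal $\mathfrak{M}\subset B$ above $\mathfrak{m}$; the minimal polynomial relation forces $f\notin\mathfrak{M}$, else $a_0\in\mathfrak{M}\cap A=\mathfrak{m}$. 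The open $D(a_0)\cap\Spm A$ is non-empty by the Jacobson property, since $a_0\neq 0$ in a Jacobson domain. Applying this with $A=R_2/\mathfrak{p}_2$ and $B=R_1/\mathfrak{p}_1$, where $\mathfrak{p}_i$ is the prime defining $Z_i$, every non-empty basic open of $Z_1$ has image containing a non-empty open of $Z_2$, which meets $T_2$ by density, producing the required point of $\psi^{-1}(T_2)\cap Z_1$ and completing the proof.
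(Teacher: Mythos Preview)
Your argument is correct. Part~(1) is the paper's ``obvious'' case written out, and Part~(2) follows the same skeleton as the paper: reduce to $Z_1$ irreducible, push forward to $Z_2=\psi(Z_1)$, use finiteness of the fibres to identify $\Phi$-periodic points upstairs and downstairs, and then invoke a fact from the theory of integral extensions. The difference lies only in that last step. The paper argues by contradiction: if the closure $Z'$ of $Z_1\cap S_1^{\Phi_1^\infty}$ were a proper closed subset of the irreducible $Z_1$, then incomparability for the finite extension forces $\psi(Z')\subsetneq\psi(Z_1)$, and since $S_2^{\Phi_2^\infty}\cap\psi(Z_1)\subset\psi(Z')$ this contradicts ${\bf (D)}_{S_2}$. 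You instead prove directly that the image of any non-empty basic open of $Z_1$ contains a non-empty open of $Z_2$, via the constant term of a minimal integral equation. Your argument is more self-contained; the paper's is shorter once one is willing to cite Bourbaki.

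One small imprecision: you write that ``the finite fibre $\psi^{-1}(y)\cap Z_1$ is stable under $\Phi_1^m$'', but $Z_1$ is only known to be $\Phi_1^n$-stable, not $\Phi_1^m$-stable. What you actually need, and what your argument gives, is that the full fibre $\psi^{-1}(y)$ is finite and $\Phi_1^m$-stable, so $\Phi_1^m$ permutes it and every point of it---in particular every point of $\psi^{-1}(y)\cap Z_1$---is $\Phi_1$-periodic. The conclusion is unaffected.
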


\begin{proof}
  Part (1) is obvious.
  To show part (2) consider $Z\subset S_1$ closed with $\Phi_1^n(Z)=Z$ for some $n>0$. Then,  replacing $n$ by $m$ for some $m>0$  the
  latter is true for each irreducible component of $Z$, so in order to show that $Z\cap
  S_1^{\Phi_1^\infty}$ is dense in $Z$ we can assume without loss of
  generality that $Z$ is irreducible.

 We assume that the closure $Z'$ of $Z\cap
S_1^{\Phi_1^\infty}$ is not equal to $Z$ and we are going to deduce a contradiction.
Incomparability, see~\cite[Section~V.2.1, Corollary~1]{Bou89}, tells us that we get a proper  inclusion  $\psi(Z') \subsetneq \psi(Z)$
of closed subsets of $S_2$. As the fibres of $\psi$ are finite we have
$\psi^{-1}(S_2^{\Phi_2^\infty})= S_1^{\Phi_1^\infty}$, so $S_2^{\Phi_2^\infty}\cap\psi(Z)\subset
\psi(Z')$. But then $ {\bf (D)}_{S_2,\Phi_2}$ applied to the closed subset $\psi(Z)$ says
that $\psi(Z')=\psi(Z)$, which is a contradiction.
\end{proof}

Let $U\subset G$ be an open subgroup and let $\bar\rho \colon G\to \GL_r(\F )$ be a
continuous representation. Let $\Phi\colon G\to G$ be an automorphism with $\Phi(U)=U$ and
with $\Phi(\bar\rho)\simeq \bar\rho$. We can then deduce compatibility of our density
property with restriction and induction.

\begin{prop} \label{prop:zigzag}\mbox{}
\begin{itemize}
\item[\rm (1)] We have the implication ${\bf (D)}_{\sS_{\bar\rho|_U}} \Rightarrow {\bf
    (D)}_{\sS_{\bar\rho}}$.
  \item[\rm (2)] If $U\subset G$ is normal  and $\bar\rho = \Ind^G_U \bar\rho_U$ with $\Phi(\bar\rho_U) \simeq \bar\rho_U $   we have the implication
  ${\bf (D)}_{\sS_{\bar\rho}} \Rightarrow {\bf (D)}_{\sS_{\bar\rho_U}}$.
\end{itemize}
\end{prop}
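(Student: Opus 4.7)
The plan is as follows.

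For part~(1) I apply Lemma~\ref{lem:densprop}(2) directly: the restriction morphism ${\rm rest}\colon \sS_{\bar\rho}\to \sS_{\bar\rho|_U}$ is induced by the finite morphism of formal schemes of Lemma~\ref{lem:rest} (after tensoring $R^P_{\bar\rho|_U}\to R^P_{\bar\rho}$ with $\bar\Q_\ell$ and using~\eqref{eq.semsim}); it is $\Phi$-equivariant because $\Phi(U)=U$, and the restriction of an arithmetic representation is arithmetic. So ${\bf (D)}_{\sS_{\bar\rho|_U}}$ implies ${\bf (D)}_{\sS_{\bar\rho}}$.

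For part~(2) I plan to combine three ingredients: the finite morphism $\xi\colon\sS_{\bar\rho_U}\to\sS_{\bar\rho|_U}$ of Lemma~\ref{lem:indfinite}, the finite restriction ${\rm rest}\colon \sS_{\bar\rho}\to \sS_{\bar\rho|_U}$, and the set-theoretic induction $\Ind^G_U\colon\sS_{\bar\rho_U}\to\sS_{\bar\rho}$ of Remark~\ref{rmk:induction}, satisfying $\xi={\rm rest}\circ \Ind^G_U$ on $\bar\Q_\ell$-points. All three commute with $\Phi$: this uses $\Phi(U)=U$ and $\Phi(\bar\rho_U)\simeq\bar\rho_U$, and for $\xi$ the normality of $U$ so that the twist by $\Phi$ merely permutes the conjugates $\bar\rho_U^s$. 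Given a closed $\Phi^n$-stable $Z\subset\sS_{\bar\rho_U}$, reduced as in the proof of Lemma~\ref{lem:densprop} to the irreducible case by replacing $n$ with a multiple, I set $Z' = \xi(Z)\subset\sS_{\bar\rho|_U}$ (closed and $\Phi^n$-stable, since $\xi$ is finite and equivariant) and $Z'' = {\rm rest}^{-1}(Z')\subset\sS_{\bar\rho}$ (closed and $\Phi^n$-stable).

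Applying the hypothesis ${\bf (D)}_{\sS_{\bar\rho}}$ to $Z''$, the arithmetic points $Z''\cap\sA_{\bar\rho}$ are dense in $Z''$. The identity $\xi={\rm rest}\circ \Ind^G_U$ gives $Z'\subset {\rm image}({\rm rest})$, hence ${\rm rest}(Z'')=Z'$; since ${\rm rest}$ is finite (hence closed), this density pushes forward, and using that the restriction of an arithmetic representation is arithmetic, $Z'\cap\sA_{\bar\rho|_U}$ is dense in $Z'$.

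Finally I descend from $Z'$ to $Z$ by contradiction. If $Z_1 := \overline{Z\cap\sA_{\bar\rho_U}}\subsetneq Z$, then incomparability for the finite $\xi$ (where irreducibility of $Z$ is used) gives $\xi(Z_1)\subsetneq Z'$. The crucial step is to show $Z'\cap\sA_{\bar\rho|_U}\subset\xi(Z_1)$: for $\tau=\xi(\rho_U)\in Z'\cap\sA_{\bar\rho|_U}$, arithmeticity of $\tau$ together with $\Phi$-equivariance of $\xi$ forces $\Phi^m(\rho_U)$ to lie in the finite fibre $\xi^{-1}(\tau)$ for some $m>0$, and a pigeonhole argument on this finite set shows that some further power of $\Phi$ fixes $\rho_U$. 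Hence $\rho_U\in Z\cap\sA_{\bar\rho_U}\subset Z_1$ and $\tau\in\xi(Z_1)$. As $\xi(Z_1)$ is closed and properly contained in $Z'$, this contradicts the density of $Z'\cap\sA_{\bar\rho|_U}$ in $Z'$. The main obstacle I foresee is this ``lift-arithmetic-through-a-finite-fibre'' step, where the normality of $U$ and the hypothesis $\Phi(\bar\rho_U)\simeq\bar\rho_U$ are both essential to conclude that arithmeticity of $\tau$ forces arithmeticity of a chosen preimage $\rho_U$.
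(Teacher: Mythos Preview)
Your proof is correct and follows essentially the same route as the paper's. For part~(1) the arguments are identical; for part~(2) the paper packages your steps by first passing to the closed image $S={\rm rest}(\sS_{\bar\rho})$ via Lemma~\ref{lem:densprop}(1) and then applying Lemma~\ref{lem:densprop}(2) to the finite map $\xi\colon \sS_{\bar\rho_U}\to S$, whereas you inline these two applications---in particular your ``crucial step'' (lifting arithmeticity through a finite $\Phi$-equivariant fibre by pigeonhole) is exactly the content of the proof of Lemma~\ref{lem:densprop}(2).
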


\begin{proof}
For part (1) we observe that the restriction map $\sS_{\bar\rho}\to \sS_{\bar\rho|_U}$ is
induced via the identification~\eqref{eq.semsim} by the
finite homomorphism of Noetherian Jacobson rings
\[
  R^P_{\bar\rho|_U}\otimes_\sO \bar\Q_\ell \to R^P_{\bar\rho}\otimes_\sO \bar\Q_\ell .
\]
 The Noetherian Jacobson property of these rings  follows from the general
fact that for a Noetherian complete local $\sO$-algebra $R$ with finite residue field, the
ring $R\otimes_{\sO} \bar\Q_\ell$ is Noetherian and Jacobson, see~\cite[Propositions~A.2.2.2 and~A.2.2.3.(ii)]{GL96}.
The finiteness of the homomorphism is
Lemma~\ref{lem:rest}.
Then part
(1)  follows from Lemma~\ref{lem:densprop}(2) with $S_1=\sS_{\bar\rho}$, $S_2=
\sS_{\bar\rho|_U}$ and $\psi={\rm rest}$.

\smallskip

We prove part (2). In view of Remark~\ref{rmk:induction} taking $\bar\Z_\ell$-points in the
diagram~\eqref{eq.indcompa} we get the commutative diagram
\[
  \xymatrix{
    \sS_{\bar\rho_U}  \ar[d]_{\rm Ind}\ar[r]^{\xi} & \sS_{\bar\rho|_U} \\
     \sS_{\bar\rho} \ar[ru]_{\rm rest} &
  }
\]
By Lemma~\ref{lem:rest} the image $S={\rm rest}( \sS_{\bar \rho}) \subset \sS_{ \bar
  \rho|_U}$  is closed, so it is naturally a maximal spectrum. By
Lemma~\ref{lem:densprop}(1) our assumption  ${\bf (D)}_{\sS_{\bar\rho}}$ implies that
${\bf (D)}_{S}$ holds. As by Lemma~\ref{lem:indfinite} the  map $\xi\colon \sS_{\bar\rho_U} \to S $ is induced  by a
finite ring homomorphism on
maximal spectra, we can apply Lemma~\ref{lem:densprop}(2) and deduce that  ${\bf
  (D)}_{\sS_{\bar\rho_U}}$ holds.
\end{proof}

Let  $X$ be a smooth connected variety over an algebraically closed field $\overline k$. Fix a geometric point
$x\in X(\bar k)$.  We apply the results from the previous sections to  the case  $G=\pi_1^{\rm \acute{e}t}(X,x)$.
As in Section~\ref{sec:conj}, we consider a
continuous representation $\bar\rho\colon G\to \GL_r(\F )$.

 For a morphism of smooth connected varieties  $\iota\colon Y\to X$ and a geometric point
 $y\in Y(\bar k)$ mapping to
 $x$, we let $\iota^*\bar\rho\colon \pi_1^{\rm \acute{e}t}(Y,y)\to \GL_r(\F) $ be the composition
 of $\iota_*\colon \pi_1^{\rm \acute{e}t}(Y,y)\to \pi_1^{\rm \acute{e}t}(X,x)$ with $\bar\rho$.

\begin{prop} \label{prop:curve}
There is a smooth connected one-dimensional $C$ and a locally closed immersion $\iota\colon C\to X$  such that the induced morphism
$\iota^*\colon  \PD_{\bar\rho}\to  \PD_{\iota^* \bar\rho} $ is a closed immersion.
\end{prop}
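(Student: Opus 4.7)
The plan is to construct a smooth, locally closed curve $C \subset X$ such that the induced map on geometric étale fundamental groups
\[
\iota_*\colon \pi_1^{\rm \acute{e}t}(C,y) \to \pi_1^{\rm \acute{e}t}(X,x)
\]
is surjective (for some geometric point $y$ of $C$ above $x$), and then to show that surjectivity of $\iota_*$ automatically forces $\iota^*\colon \PD_{\bar\rho}\to \PD_{\iota^*\bar\rho}$ to be a closed immersion of formal schemes.

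To produce $C$, I would invoke the Lefschetz theorem for étale fundamental groups of smooth quasi-projective varieties over an algebraically closed field. Starting from a smooth projective compactification $\overline X$ of $X$ with boundary divisor $D=\overline X\setminus X$, one iteratively cuts by sufficiently general hyperplane sections transverse to $D$ and applies the étale Bertini--Lefschetz theorem, which in each step produces a smooth subvariety of dimension one less whose étale fundamental group surjects onto the previous one. After $\dim(X)-1$ steps the result is a smooth curve, locally closed in $X$, with the required surjectivity on~$\pi_1^{\rm \acute{e}t}$.

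Given such a curve, the closed immersion statement is essentially formal. The restriction morphism $\iota^*$ corresponds under Proposition~\ref{prop:reppseudo}(1) to a continuous local homomorphism of complete local $\sO$-algebras
\[
\psi\colon R^P_{\iota^*\bar\rho} \longrightarrow R^P_{\bar\rho},
\]
namely the map classifying the pseudodeformation of $\iota^*\bar\rho$ obtained by restricting the universal pseudodeformation $D^{R^P_{\bar\rho}}$ along $\iota_*$. The morphism $\iota^*$ is a closed immersion exactly when $\psi$ is surjective. By Proposition~\ref{prop:reppseudo}(2), $R^P_{\bar\rho}$ is topologically generated as an $\sO$-algebra by finitely many elements $\Lambda_j(g_i)$ for a suitable family $g_1,\ldots,g_m\in \pi_1^{\rm \acute{e}t}(X,x)$ and $1\le j\le r$. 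Using surjectivity of $\iota_*$, lift each $g_i$ to some $h_i\in \pi_1^{\rm \acute{e}t}(C,y)$ with $\iota_*(h_i)=g_i$. By the functoriality of the coefficients of the characteristic polynomial, $\psi$ sends $\Lambda_j(h_i)\in R^P_{\iota^*\bar\rho}$ to $\Lambda_j(g_i)\in R^P_{\bar\rho}$, so its image contains a topological generating set and hence coincides with $R^P_{\bar\rho}$.

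The main obstacle is the first step: producing a smooth \emph{locally closed} curve in $X$ itself (not merely in some auxiliary Jouanolou torsor or modification) realising a surjection on étale fundamental groups in arbitrary characteristic. The subtlety lies in ensuring smoothness and transversality of the iterated hyperplane sections relative to the boundary $D$ at infinity so that the resulting curve actually lives inside $X$. Once this geometric input is secured, the deduction of the closed immersion property from the finite topological generation provided by Proposition~\ref{prop:reppseudo}(2) is immediate.
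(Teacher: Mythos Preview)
Your formal deduction---that surjectivity of $\iota_*$ on fundamental groups forces $\psi\colon R^P_{\iota^*\bar\rho}\to R^P_{\bar\rho}$ to be surjective via Proposition~\ref{prop:reppseudo}(2)---is correct. The gap is exactly where you locate it: a Lefschetz theorem producing a locally closed curve $C\subset X$ with $\pi_1^{\rm \acute{e}t}(C)\twoheadrightarrow\pi_1^{\rm \acute{e}t}(X)$ for an arbitrary smooth open $X$ in positive characteristic is not a result you can quote. For non-proper $X$ the wild part of $\pi_1^{\rm \acute{e}t}(X)$ is in general not topologically finitely generated; Bertini only tells you that for each \emph{fixed} finite \'etale cover a \emph{generic} curve pulls it back connectedly, and over the countable field $\bar k=\overline{\F}_p$ you cannot intersect these generic loci. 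The Lefschetz statements that are available (SGA~2 for projective $X$, or tame variants for open $X$) do not cover the full $\pi_1^{\rm \acute{e}t}$ of an open variety.

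The paper sidesteps this by noting that full surjectivity of $\iota_*$ is unnecessary. Since $R^P_{\bar\rho}$ is complete local, $\psi$ is surjective as soon as it is surjective modulo $\mathfrak m^2$. A lemma of Chenevier says that the universal determinant modulo $\mathfrak m^2$ factors through a \emph{finite} quotient $G\to G/U$; hence the generators $\Lambda_j(g)$ of $R^P_{\bar\rho}/\mathfrak m^2$ depend only on the image of $g$ in $G/U$. So one only needs $\pi_1^{\rm \acute{e}t}(C)\to G/U$ surjective, i.e.\ a \emph{single} connected finite \'etale cover $X'\to X$ to remain connected over $C$. That is exactly what one application of Jouanolou's Bertini (to the unramified map $X'\to\A^N_{\bar k}$, after passing to an open affine of $X$) delivers. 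Your argument would go through unchanged if you replaced ``$\iota_*$ surjective'' by ``$\iota_*$ surjective onto $G/U$'' and inserted the Nakayama step; the $\mathfrak m^2$ reduction is the missing idea.
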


\begin{proof}
Let $\mathfrak{m} \subset R^P_{\bar \rho}$ be the maximal ideal. By~\cite[Lemma~7.52]{Che14}
there exists an   open normal subgroup $U\subset G$ such that the composed determinant
\[
  R^P_{\bar \rho} [ G ] \xrightarrow{ D^{R^P_{\bar\rho}}}  R^P_{\bar \rho} \to  R^P_{\bar
    \rho} /\mathfrak{m}^2
\]
factors through a determinant $  R^P_{\bar \rho}/\mathfrak{m}^2 [G/U]\to  R^P_{\bar \rho}/\mathfrak{m}^2  $.

It is sufficient to choose $\iota $ such that
\[
\iota^* \colon  R^P_{\iota^* \bar \rho}\to  R^P_{\bar \rho}/\mathfrak{m}^2
\]
is surjective. Recall from Proposition~\ref{prop:reppseudo}(2) that the $\sO$-algebra
$ R^P_{\bar \rho}/\mathfrak{m}^2$ is generated by the coefficients of the characteristic
polynomials $\Lambda_j(\bar g)$ with $1\le j\le r$ and $\bar g\in G/U$. So any closed immersion $\iota\colon Y\to
X$ such that the composition
\[
\pi_1^{\rm \acute{e}t}(Y,y) \xrightarrow{\iota_*} G\to G/U
\]
is surjective will  suffice.

To find such a $\iota$ we can assume without loss of generality that $X\hookrightarrow \A^N_{\overline k}$ is affine and use Bertini's
theorem~\cite[Theorem~6.3]{Jou} applied to the \'etale covering $X'$ of $X$ corresponding to
$U\subset G$ and the unramified map $X'\to \A^N_{\overline k} $. In fact Bertini tells us
that a generic affine
line $L\subset  \A^N_{\overline k}$ has the property that
$X' \times_{ \A^N_{\overline k}}L$ is smooth connected and one-dimensional, so one can
take $C=X \times_{ \A^N_{\overline k}}L$.
\end{proof}

\begin{rmk} \label{rmk:open}
The above argument shows in addition that if $X^\circ \xrightarrow{\iota} X$ is an open embedding, then the induced morphism $\iota^*\colon  \PD_{\bar\rho}\to  \PD_{\iota^* \bar\rho} $ is a closed immersion. Indeed 
$
\pi_1^{\rm \acute{e}t}(X^\circ ,y) \xrightarrow{\iota_*} G
$
is surjective, so a fortiori
\[
\pi_1^{\rm \acute{e}t}(X^\circ ,y) \xrightarrow{\iota_*} G\to G/U
\]
and one argues as above. 
\end{rmk}

\begin{proof}[Proof of Proposition~\ref{prop:lefbelyi}]
 For part (1), we choose $\iota$ as in Proposition~\ref{prop:curve}. Then via the identification~\eqref{eq.semsim}
one sees that $\iota^*\colon \sS_{\bar\rho} \hookrightarrow  \sS_{\iota^* \bar\rho}  $ is a closed
embedding of topological spaces. One can descend $\iota$ to a
morphism $\iota_0\colon Y_0\to X_0$ of varieties over a finite field $k$. Then $\iota^*$
is $\Phi$-equivariant. So ${\bf (D)}_{\sS_{\iota^*\bar\rho}} \Rightarrow {\bf
    (D)}_{\sS_{\bar\rho}}$.

\smallskip

We prove part (2).  By part (1) we may assume that $X_0$ is a smooth geometrically connected curve. Let $\pi\colon Y_0\to X_0$ be a finite \'etale cover trivializing $\bar \rho$. Let $\bar 1$ be the trivial rank $r$ representation on $Y$ with value in $\GL_r(\F)$.
 By  Proposition~\ref{prop:zigzag}(1), we may assume that $\bar \rho=\bar 1$ on the
one-dimensional $X_0$.

Let $X_0\hookrightarrow \bar X_0$ be the normal compactification.   By  \cite[Theorem~5.6]{Sai97} if $p\ge 3$ and \cite[Theorem~1.2]{SY20} if $p=2$,  there is a tame finite Belyi map $\pi\colon \bar X_0\to \P^1$  with ramification in $\{0,1,\infty\}$.  Let $\Sigma\subset \P^1$ be the union of $\pi(\bar X_0
\setminus X_0)$ with  $\{0,1,\infty\}$.  Let $z$ be the coordinate on $\P^1$  with value $0$ at $0$, $1$ at $1$ and $\infty$ at $\infty$.
Let us denote by $a_i$ the $z$ coordinate of  the other closed points of $\Sigma$. Then
$a_i$ lies in the units of a finite field extension of $\F_q$, thus  there is an integer $n>0$ prime to $p$ such that the morphism $z^n\colon \P^1\to \P^1$, which is defined
over $\F_q$, sends $\Sigma$ to $\{0,1,\infty\}$. It follows that the finite morphism
$\tau=z^n \circ \pi\colon \bar X_0 \to \P^1$ has the property that $\bar X_0\setminus
\tau^{-1}(\{0,1,\infty\}) \subset X_0$.  By Remark~\ref{rmk:open}  we can replace $X_0$ by $\bar X_0\setminus
\tau^{-1}(\{0,1,\infty\})$.
Moreover, using  Proposition~\ref{prop:zigzag}(1)   again, we can replace $ X_0$ by the Galois hull of  {\color{blue} $\tau$.}
We finally apply  Proposition~\ref{prop:zigzag}(2) in order to reduce to the case of the curve $\P^1\setminus
\{0,1,\infty\}$ and to the representation $\bar \rho= {\rm Ind}_{\pi_1^{\rm \acute{e}t}(X)}^{\pi_1^{\rm \acute{e}t}(\P^1\setminus
\{0,1,\infty\}) }   \bar 1$, which is tame as $\tau$ is tame. This finishes the proof.
\end{proof}

\section{Proof of Theorem B} \label{sec:thmB}

The aim of this section is to prove Theorem B.
Let $ \mathcal X$ be the scheme $\P^1_W\setminus \{0,1,\infty\}$ over the ring of
Witt vectors $W=W(k)$. Set $K={\rm Frac}(W)$ and fix an algebraic closure $\bar K$ of $K$
together with an embedding $\bar K\hookrightarrow \mathbb C$ and an isomorphism of the
residue field of $\bar K$ with $\bar k$. We also fix a lift $ x^\diamond\in \mathcal
 X(\bar K)$ of our base point  $ x\in
X_0(\bar k)$.

  Fix an orientation for $\mathbb C$  and let $\gamma_0, \gamma_1 ,
  \gamma_\infty\in \pi_1^{\rm top}(\mathcal X(\C),x^\diamond )$ be suitable ``simple'' loops around $0$, $1$
  and $\infty$ such that
  \ga{}{ \gamma_0\cdot \gamma_1\cdot \gamma_\infty=1. \notag}
  Then $ \pi_1^{\rm top}(\mathcal X(\C),x^\diamond )$ is a free group with generators
  $\gamma_0, \gamma_1$.

The \'etale fundamental group $\pi_1^{\rm \acute{e}t}(\mathcal X_{\bar K},x^\diamond) $ is the pro-finite
completion of the topological fundamental group $\pi_1^{\rm top}(\mathcal X(\C),x^\diamond
)$ and there is a canonical outer action of $H={\rm Gal}(\bar K/ K)$ on the former. Let
$\chi\colon  H\to \widehat \Z^\times$ be the cyclotomic character.

\begin{claim}\label{claim:galactloc}
For $h\in {\rm Gal}(\bar K/ K)$ and $a\in \{0,1,\infty\}$ the element $h( \gamma_a )$
(which is well-defined up to conjugation)
is conjugate to $\gamma_a^{\chi(h)}$ in  $\pi_1^{\rm \acute{e}t}(\mathcal X_{\bar K},x^\diamond) $.
\end{claim}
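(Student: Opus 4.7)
The plan is to identify $\gamma_a$, up to conjugation, with a topological generator of the inertia subgroup at the puncture $a$, and then to invoke the classical fact that the Galois action on such inertia is given by the cyclotomic character. This is a standard computation but I would spell it out in three steps.

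First, fix $a\in\{0,1,\infty\}$ and a uniformizer $t_a$ of $\P^1_W$ at $a$ defined over $W$ (e.g.\ $t_0=z$, $t_1=z-1$, $t_\infty=1/z$). Consider the punctured formal neighborhood $\eta_a=\Spec \bar K((t_a))$ and the natural morphism $\eta_a\to \mathcal X_{\bar K}$. After choosing an \'etale path from a geometric generic point $\bar\eta_a$ to $x^\diamond$, this induces a homomorphism
\[
\iota_a\colon \pi_1^{\rm \acute et}(\eta_a,\bar\eta_a)\to \pi_1^{\rm \acute et}(\mathcal X_{\bar K},x^\diamond),
\]
well-defined up to inner automorphism. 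The image $I_a:=\iota_a\bigl(\pi_1^{\rm \acute et}(\eta_a,\bar\eta_a)\bigr)$ is, by construction, the inertia subgroup at $a$. Comparison with the topological fundamental group shows that $\gamma_a$ is conjugate to $\iota_a(\sigma_a)$ for a suitable topological generator $\sigma_a$ of $\pi_1^{\rm \acute et}(\eta_a,\bar\eta_a)\simeq\hat\Z$.

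Second, Kummer theory furnishes a canonical isomorphism
\[
\pi_1^{\rm \acute et}(\eta_a,\bar\eta_a)\xrightarrow{\sim}\varprojlim_n \mu_n(\bar K)=\hat\Z(1),
\]
namely the topological generator $\sigma_a$ corresponds to the system $(\zeta_n)_n$ determined by $\sigma_a(t_a^{1/n})=\zeta_n\cdot t_a^{1/n}$ on a chosen compatible system of $n$-th roots of $t_a$. By the very definition of $\chi$, the Galois group $H$ acts on $\hat\Z(1)$ by $\chi$. Since $t_a$ is defined over $K$, the morphism $\eta_a\to\mathcal X$ descends to $W$, hence is $H$-equivariant; passing to fundamental groups, $\iota_a$ is $H$-equivariant modulo inner automorphism. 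Consequently, for $h\in H$ the element $h(\iota_a(\sigma_a))$ is conjugate to $\iota_a(\sigma_a^{\chi(h)})=\iota_a(\sigma_a)^{\chi(h)}$.

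Combining the two previous steps gives that $h(\gamma_a)$ is conjugate to $\gamma_a^{\chi(h)}$ in $\pi_1^{\rm \acute et}(\mathcal X_{\bar K},x^\diamond)$, as claimed. The only genuine subtlety is bookkeeping: the outer Galois action is only well-defined modulo conjugation, so both the identification of $\gamma_a$ with a generator of $I_a$ and the $H$-equivariance of $\iota_a$ must be stated and verified up to inner automorphism. The substantive input, namely that the Galois action on the tame (here, full) inertia of a punctured curve in characteristic $0$ is via the cyclotomic character, is classical; no further geometric input about $\mathcal X$ is needed beyond the availability of a $K$-rational uniformizer at each puncture.
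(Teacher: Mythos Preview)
Your argument is correct and follows essentially the same approach as the paper: both reduce to the inertia subgroup at the puncture $a$, identify it with $\widehat{\Z}(1)$, and use that $H$ acts on $\widehat{\Z}(1)$ through the cyclotomic character. The only cosmetic difference is that the paper works with the fraction field of the henselian local ring $\sO^h_{\P^1_{\C},a}$ and invokes Deligne's comparison diagram from \cite[\S 1.1.10]{Del73} directly, whereas you use the formal punctured disk $\Spec\bar K((t_a))$ and spell out the Kummer-theoretic identification by hand.
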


\begin{proof}
  Up to inner automorphisms
one can replace the base point $x^\diamond$ in the \'etale fundamental group by the base
point $y_a\colon \Spec (\bar
K_a)\to \mathcal X_K $, where $\bar K_a$ is an algebraic closure of the fraction field
$K_a$ of $\sO_{\P^1_{\C },a}^h $. As explained in~\cite[\S~1.1.10]{Del73} there is a
corresponding generalized topological base point  of $\mathcal X(\C)$, which we simply
write as $\tilde D^*_a$. Here $D^*_a$ is
a small punctured disk around the point $a\in \P^1_\C(\C)$ and $\tilde D^*_a$ is its
universal covering. Then by
{\it loc. cit.}\ we have a commutative diagram
\[
  \xymatrix{
\mathbb Z  \ar@{=}[r] \ar@{^{(}->}[d] & \pi_1^{\rm top} (D^*_a, \tilde D^*_a) \ar[r]
\ar[d] &  \pi_1^{\rm top} (\mathcal X(\C ),\tilde D^*_a )
\ar[d] \\
\widehat \Z(1)  \ar@{=}[r] & {\rm Gal}(\bar K_a/ K_a ) \ar[r]  &  \pi_1^{\rm \acute{e}t}(\mathcal X_{\bar K} , y_a)
  }
\]
where the vertical maps are pro-finite completions.
\end{proof}

By~\cite[Expos\'e~XIII, \S~2.10 and~4.7]{Gro71} the specialization homomorphism
\[
  {\rm sp} \colon \pi_1^{\rm \acute{e}t}(\mathcal X_{\bar K} ,x^\diamond)\to \pi_1^{\rm t}(X,x)
\]
is surjective and compatible with the action of the Frobenius lift $\Phi$. Here the codomain is the tame fundamental group which we also write
$G^{\rm t}$ in the following.

 We set $g_a={\rm sp}(\gamma_a)$ for $a\in \{0,1,\infty \}$, so we have
 \ga{}{ g_0\cdot g_1\cdot g_\infty=1 \ {\rm in}  \ G^{\rm t}. \notag}
In addition, $G^{\rm t}$ is topologically generated by $g_0, g_1$. By Lemma~\ref{lem:pdefdim2} the map
\ga{eq.charthmB}{
\Ch_{ \underline g} \colon \PD_{\bar \rho}  \to \Def_{\bar \rho(\underline{g})}
}
is a closed immersion, where $\underline g=(g_0,g_1,g_\infty^{-1})$ and where $\PD_{\bar
  \rho}$ classifies pseudorepresentations of the group $G^{\rm t}$. From Claim~\ref{claim:galactloc}  we conclude that
 \ga{}{ \Phi (g_a) \ \text{ is conjugate to }\  g_a^{q}, \notag}
 for $a=0,1,\infty$.
 In particular, as $\bar \rho$ is fixed by $\Phi$, one has
\ga{}{ \Ch( \bar\rho(\underline g^{q}))=\Ch(\bar\rho(\underline g)). \notag}
We assume  without loss of generality,   by replacing $k$ by a finite extension and thus
$\Phi$ by a power,  that the family of roots $\underline \mu$ of the polynomials $ \Ch(\bar\rho(\underline g))$ satisfy $\underline \mu^{q}=\underline
\mu$, thus the isomorphism  $\Def_{\underline \mu} \xrightarrow{[q]}  \Def_{\underline \mu}$ is well defined.
This implies that with the notation as in Section~\ref{sec:charpoly} we obtain a commutative diagram
\[
  \xymatrix{
\PD_{\bar\rho} \ar[r]^{\Phi}  \ar[d]_{\Ch_{ \underline g} }&\PD_{\bar\rho}  \ar[d]^{\Ch_{
    \underline g} }\\
\Def_{\bar \rho(\underline{g})} \ar[r]^{[q]} & \Def_{\bar \rho(\underline{g})}
\\
\Def_{\underline \mu} \ar[r]^{[q]} \ar[u]^{\rm poly} & \Def_{\underline \mu} \ar[u]_{\rm poly}
}
\]
which is $\Phi$-equivariant.

As $\Ch_{ \underline g} $ is a closed immersion we have the implication
\[
  {\bf (D)}_{\Def_{\bar \rho(\underline{g})}(\bar\Z_\ell)}
\Rightarrow {\bf (D)}_{\sS_{\bar\rho}},
\]
see Section~\ref{sec:resind}.

As the morphism $\rm poly$ is surjective on $\bar\Z_\ell$-points,
we have by Lemma~\ref{lem:densprop}(1) the implication
\[
 {\bf (D)}_{\Def_{\underline \mu } (\bar\Z_\ell)} \Rightarrow {\bf (D)}_{\Def_{\bar \rho(\underline{g})}(\bar\Z_\ell)}.
\]

Property $ {\bf (D)}_{\Def_{\underline \mu } (\bar\Z_\ell)}$ is a consequence of~\cite[Theorem~1.7]{EK19} by noting that translating $\Def_{\underline \mu}$
by the Teichm\"uller lift of $\underline\mu$ we can assume that $\underline\mu=(1,\ldots ,
1)$, so that
$\Def_{\underline \mu}(\bar \Z_\ell )$ consists of  the $\bar\Q_\ell$-points of the
multiplicative formal  Lie group associated to $\pi=\Z_\ell^6$.  Indeed, we apply {\it loc. cit.} with $\sigma=[q]$. This implies that the closed subset $Z$ in Property  $ {\bf (D)}_{\Def_{\underline \mu } (\bar\Z_\ell)}$ is a finite union of torsion translates of formal subtori.

 Finally, the points of ${\rm D}_{\underline \mu}$ invariant under $[q]^n$  for  integers $n>0$
  are  precisely the roots of unity.  It follows that those points located on $Z$ are dense on $Z$ by \cite[Lemma~3.1]{EK19}.
 In particular  
a point of $\sS_{\bar \rho}$ is arithmetic if and only if its local monodromies at $0,1,\infty$ are quasi-unipotent.
 This finishes the proof of Theorem B.\qed

\begin{rmk} \label{rmk:akshaymark}
What makes our argument work is the particular property due to Riemann that rank two local systems on
$\P^1_{\C}\setminus \{0,1,\infty\}$ are rigid, see~\cite[p.~1]{Kat96}.
\end{rmk}

\section{Proof of Theorem A} \label{sec:thmA}

The aim of this section is to prove Theorem A.
So $\bar\rho$ is supposed to be absolutely irreducible. The arguments rely on 
~\cite{deJ01},  and are partly similar to \cite{BK}  and~\cite[Section~5]{BHKT19}.

 As recalled in Proposition~\ref{prop:reppseudo}(3)
 $R_{\bar \rho}^P$ is then Mazur's universal  deformation ring, which parametrizes
isomorphism classes of continuous representations $\rho\colon G\to \GL_r(A)$ for $A\in \sC$  such that
$\rho\otimes_A \F$ is isomorphic to $\bar \rho$.
In this case we simply write  $R_{\bar \rho}$ for  $R_{\bar \rho}^P$ and $\Def_{\bar \rho}$ for $\PD_{\bar \rho}$.

 \begin{lem}\label{lem:sm}
The $\sO$-algebra $R_{\bar \rho}$ is formally smooth, {\it i.e.}\ there
   is a non-canonical $\sO$-isomorphism
 \ga{}{ R_{\bar \rho}\cong \sO[[t_1,\ldots, t_b]] .\notag}

 \end{lem}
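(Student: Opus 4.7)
The plan is to invoke the tangent--obstruction formalism of Mazur--Tilouine (cf.\ \cite[\S 3]{Til96}): under the absolute irreducibility hypothesis, Proposition~\ref{prop:reppseudo}(3) identifies $R_{\bar\rho}$ with the ring pro-representing the functor of isomorphism classes of continuous lifts of $\bar\rho$ on $\sC$. The standard computation then identifies the relative mod-$\pi$ tangent space with $H^1(G, \mathrm{ad}(\bar\rho))$, while obstructions to extending a deformation across a small extension $A \surj A/I$ with $\mathfrak{m}_A\cdot I = 0$ are captured by a class in $H^2(G, \mathrm{ad}(\bar\rho))$, where $\mathrm{ad}(\bar\rho) = M_r(\F)$ carries the $G$-action by conjugation through $\bar\rho$. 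By the classical criterion, a presentation $R_{\bar\rho} \cong \sO[[t_1,\ldots,t_b]]$ with $b = \dim_\F H^1(G, \mathrm{ad}(\bar\rho))$ is then equivalent to the single condition $H^2(G, \mathrm{ad}(\bar\rho)) = 0$.

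The entire task therefore reduces to this $H^2$-vanishing for $G = \pi_1^{\mathrm{\acute{e}t}}(X, x)$. In the affine case, it follows at once from Artin's vanishing theorem: a smooth affine curve over the algebraically closed field $\bar k$ has $\ell$-cohomological dimension one (as $\ell \neq p$), so $H^2$ of every finite $\ell$-primary $G$-module is zero. This is really the case that is needed in the sequel, because Remark~\ref{rmk:open} allows us to shrink $X$ to an affine open subcurve while keeping the pseudodeformation space embedded as a closed subscheme.

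The main obstacle is to treat a smooth projective $X$ intrinsically. Here Poincar\'e duality, combined with the triviality of the Tate twist on the geometric fundamental group, gives
\[
H^2(G, \mathrm{ad}(\bar\rho)) \;\cong\; H^0(G, \mathrm{ad}(\bar\rho))^\vee,
\]
which is a copy of $\F$ by Schur's lemma, hence a priori non-zero. I would handle this by separating trace and trace-zero parts via $\mathrm{ad}(\bar\rho) = \mathrm{ad}^0(\bar\rho) \oplus \F\cdot \mathrm{id}$ (valid because $\ell \nmid r$ in the situations of interest; in the remaining edge case a minor adjustment is required). On the trace-zero summand one has $H^0(G, \mathrm{ad}^0(\bar\rho)) = 0$, so duality forces $H^2(G, \mathrm{ad}^0(\bar\rho)) = 0$ and the trace-zero deformations are unobstructed. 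The determinant summand corresponds to deforming the character $\det \bar\rho$, which is unobstructed on its own because characters of the abelianised pro-$\ell$ completion of $\pi_1^{\mathrm{\acute{e}t}}(X)$ lift freely from $\F^\times$ to $\sO^\times$. Assembling these two unobstructed problems yields the desired formal smoothness of $R_{\bar\rho}$ over $\sO$ of the stated dimension $b = \dim_\F H^1(G, \mathrm{ad}(\bar\rho))$.
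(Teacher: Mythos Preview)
Your affine case is correct and coincides with the paper's: $H^2(G,\mathrm{ad}(\bar\rho))=0$ by Artin's bound on the $\ell$-cohomological dimension of a smooth affine curve. Your aside about Remark~\ref{rmk:open}, however, does not rescue the lemma as stated: the closed immersion $\PD_{\bar\rho}\hookrightarrow\PD_{\iota^*\bar\rho}$ into a formally smooth target says nothing about smoothness of the source, and Theorem~A (which is where the lemma is used) is applied to projective curves directly.

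In the projective case there is a genuine gap. The decomposition $\mathrm{ad}(\bar\rho)=\mathrm{ad}^0(\bar\rho)\oplus\F\cdot\mathrm{id}$ requires $\ell\nmid r$, and nothing in the hypotheses of the lemma or of Theorem~A (only $\ell>2$) guarantees this. When $\ell\mid r$ the trace form is degenerate on $\mathrm{ad}^0$, so Poincar\'e duality no longer yields $H^2(G,\mathrm{ad}^0(\bar\rho))\cong H^0(G,\mathrm{ad}^0(\bar\rho))^\vee$; instead one gets $H^2(G,\mathrm{ad}^0(\bar\rho))\cong H^0\!\big(G,\mathrm{ad}(\bar\rho)/\F\cdot\mathrm{id}\big)^\vee$, and the long exact sequence identifies that $H^0$ with the kernel of $H^1(G,\F)\to H^1(G,\mathrm{ad}(\bar\rho))$, for which there is no general vanishing. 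So the ``minor adjustment'' is not minor as written.

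The paper circumvents the splitting entirely. It places the obstruction map in a commutative square comparing $\Def_{\bar\rho}$ with $\Def_{\det\bar\rho}$, with the trace $H^2(X,\sE nd(\sF))\to H^2(X,\F)$ on the right, and shows that this trace on $H^2$ is an \emph{isomorphism}: it is surjective because $\dim X=1$, and both sides are one-dimensional by Poincar\'e duality together with the self-duality $\sE nd(\sF)^\vee\cong\sE nd(\sF)$ and Schur's lemma. Since $\Def_{\det\bar\rho}$ is formally smooth (the abelian pro-$\ell$ quotient $G^{\mathrm{ab},\ell}$ of the fundamental group of a smooth projective curve is torsion-free), the obstruction class has vanishing trace and hence is zero. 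This argument is uniform in $\ell$ and $r$ and is the correct replacement for your trace/trace-zero decomposition.
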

 \begin{proof}
Let $\Def_{\det\bar\rho} = \Spf R_{\det\bar\rho}$ be the universal deformation space of the degree one representation
\[\det\bar \rho\colon G\to \F^\times.\]
   Let $0\to I\to B\to A\to 0$ be an extension in $\sC$ such that $I \cdot \mathfrak{m}_B=0$ (so the $B$-module structure on $I$ factors through $\F$).
By~\cite[\S~1.6]{Maz89}, there exists a canonical commutative obstruction diagram with exact
rows
\ga{eq.obsseq}{
  \begin{split}
  \xymatrix{
\Def_{\bar \rho} (B) \ar[r]  \ar[d]^{\det} & \ar[r]^-{O}  \Def_{\bar \rho} (A)   \ar[d]^{\det}  & H^2(X ,\sE nd(\sF) )
\ar[d]^{\rm tr} \\
 \Def_{\det\bar \rho} (B) \ar[r]  &  \ar[r]^-{O} \Def_{\det\bar \rho} (A) &  H^2(X ,\F )
}
\end{split}
}
Here $\sF$ is the lisse \'etale sheaf on $X$ corresponding to $\bar \rho$ and we use
that  the canonical map
 \ga{}{H^2(G, {\rm Ad}_{\bar\rho} )  \hookrightarrow H^2(X, \sE nd(\sF)) \notag}
induced by the  Hochschild-Serre spectral sequence of the universal covering $\tilde
X\to X$ is injective. The latter injectivity is due to the fact that $\sF$ is trivialized on   $\tilde
X$ and that its first cohomology on $\tilde X$ vanishes.

\smallskip

{\em First case:}  $X$ is affine.\\
By~\cite[Corollary~3.5]{Art73} we have $ H^2(X ,\sE nd(\sF) )=0$, so the first exact row in~\eqref{eq.obsseq} tells
us that $\Def_{\bar \rho}$ is formally smooth.

\smallskip

{\em Second case:}  $X$ is projective.\\
This case follows from the following two claims and a chase in the
diagram~\eqref{eq.obsseq}.
\end{proof}

\begin{claim}\label{claim:1degsm}
$ \Def_{\det\bar \rho} $ is formally smooth over $\sO$.
\end{claim}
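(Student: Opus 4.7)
The plan is to identify $\Def_{\det \bar\rho}$ explicitly with a power series ring over $\sO$ by passing to the maximal pro-$\ell$ abelian quotient of $G = \pi_1^{\rm \acute{e}t}(X,x)$.

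First I would reduce to deforming the trivial character. Since $\det \bar\rho \colon G \to \F^\times$ takes values in a cyclic group of order prime to $\ell$, the Teichm\"uller section gives a canonical continuous character $\tilde\chi \colon G \to \sO^\times$ lifting $\det\bar\rho$. Multiplication by $\tilde\chi$ then furnishes a functorial bijection $\Def_{\det\bar\rho}(A) \xrightarrow{\sim} \Hom_{\rm cont}(G, 1 + \mathfrak{m}_A)$ for $A \in \sC$. As $\F$ has characteristic $\ell$, the target $1 + \mathfrak{m}_A$ is a pro-$\ell$ abelian group, so any such continuous homomorphism factors through $T := G^{{\rm ab},\,\mathrm{pro}\text{-}\ell}$.

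The crucial input from the projectivity of $X$ is then the identification $T \cong T_\ell \Pic^0(X)$, obtained from the $\ell$-adic Kummer sequence on $X$ (or equivalently from class field theory for the smooth projective variety $X$). Since $\Pic^0(X)$ is an abelian variety, this Tate module is a free $\Z_\ell$-module of rank $b = 2 \dim \Pic^0(X)$. Once $T \cong \Z_\ell^b$ is known, the deformation functor becomes $A \mapsto (1 + \mathfrak{m}_A)^b$, which is represented by $\sO[[t_1, \ldots, t_b]]$ via the universal element $(1+t_1, \ldots, 1+t_b)$. This exhibits $R_{\det\bar\rho}$ as a power series ring over $\sO$ and in particular gives formal smoothness.

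The main obstacle is the torsion-freeness of $T$: without it, any $\ell$-power torsion element would contribute a non-smooth factor of the form $\sO[t]/((1+t)^{\ell^n}-1)$ to $R_{\det\bar\rho}$. So the real content of the claim lies in identifying the maximal pro-$\ell$ abelian quotient of $\pi_1^{\rm \acute{e}t}(X)$ with the Tate module of an abelian variety, a feature specific to the projective case. Note that if one instead tried to prove the claim directly via the obstruction sequence of~\cite{Maz89}, the group $H^2(X,\F)$ is nonzero in general, and one would still need this torsion-freeness to see that the character-theoretic obstructions all vanish.
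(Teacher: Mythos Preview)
Your proof is correct and follows essentially the same route as the paper: reduce to the trivial character via the Teichm\"uller lift, so that $R_{\det\bar\rho}\cong \sO\llbracket G^{{\rm ab},\ell}\rrbracket$, and then invoke the torsion-freeness of $G^{{\rm ab},\ell}$. The only difference is that you spell out the reason for this torsion-freeness (identifying $G^{{\rm ab},\ell}$ with $T_\ell\Pic^0(X)$), whereas the paper simply asserts it.
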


\begin{claim}\label{claim:triso}
The map $\rm tr$ in~\eqref{eq.obsseq} is an isomorphism.
\end{claim}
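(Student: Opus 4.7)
The plan is to deduce Claim~\ref{claim:triso} from Schur's lemma via Poincar\'e duality. Since $X$ is a smooth projective curve over the algebraically closed field $\bar k$ and $\ell \neq p$, Poincar\'e duality yields a perfect pairing
\[
H^2(X, \sG) \times H^0(X, \sG^\vee(1)) \to H^2(X, \F(1)) \xrightarrow{\sim} \F
\]
for every lisse $\F$-sheaf $\sG$ on $X$, functorially in $\sG$. I would apply this to the sheaves $\sG = \sE nd(\sF)$ and $\sG = \F$.

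The trace form $\sE nd(\sF) \otimes \sE nd(\sF) \to \F$, $(A,B) \mapsto \mathrm{tr}(AB)$, is non-degenerate on stalks and hence identifies $\sE nd(\sF)^\vee \cong \sE nd(\sF)$ canonically. Under this identification, the transpose of $\mathrm{tr}\colon \sE nd(\sF) \to \F$ is the scalar embedding $\iota\colon \F \to \sE nd(\sF)$, $1 \mapsto \id_\sF$, because the relation $\mathrm{tr}(A\cdot \id_\sF)=\mathrm{tr}(A)$ characterizes the adjoint of $\mathrm{tr}$ under the trace form. By functoriality of Poincar\'e duality, the map $\mathrm{tr}$ appearing in~\eqref{eq.obsseq} is therefore $\F$-linearly dual to
\[
\iota_* \colon H^0(X, \F(1)) \to H^0(X, \sE nd(\sF)(1)).
\]
Since $\bar k$ is algebraically closed, $\pi_1(X)$ acts trivially on $\F(1)$, so $H^0(X, \sG(1)) = H^0(X, \sG) \otimes_\F \F(1)$ for any lisse $\sG$; tensoring with the invertible $\F$-module $\F(1)$ does not affect whether a map is an isomorphism, and so it is enough to show that the untwisted map $\F = H^0(X, \F) \to H^0(X, \sE nd(\sF))$, $1 \mapsto \id_\sF$, is an isomorphism.

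This is just Schur's lemma: absolute irreducibility of $\bar\rho$ forces $H^0(X, \sE nd(\sF)) = \mathrm{End}_G(\bar\rho) = \F\cdot \id_\sF$. Thus $\iota_*$ is an isomorphism of one-dimensional $\F$-vector spaces, and so by Poincar\'e duality is its dual $\mathrm{tr}$ on $H^2$. I foresee no substantive obstacle here: the proof is a clean combination of Poincar\'e duality on a curve with Schur's lemma, and, importantly, it works for any rank $r$ without assuming $r$ invertible in $\F$---so it does not degenerate when $\ell \mid r$, a case in which one cannot split off the trace-free part $\sE nd^0(\sF)$ of $\sE nd(\sF)$.
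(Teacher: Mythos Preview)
Your proof is correct and uses the same ingredients as the paper's argument---Poincar\'e duality on a smooth projective curve, the self-duality $\sE nd(\sF)^\vee\cong\sE nd(\sF)$ via the trace form, and Schur's lemma for the absolutely irreducible $\bar\rho$. The only organizational difference is that the paper first observes that $\mathrm{tr}$ is surjective (since the sheaf-level trace is surjective and $X$ has cohomological dimension one) and then checks that both $H^2$ groups are one-dimensional, whereas you dualize the map $\mathrm{tr}$ directly to the scalar inclusion on $H^0$ and invoke Schur there; this is a minor repackaging rather than a different route.
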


\begin{proof}[Proof of Claim~\ref{claim:1degsm}]  We know that if $\det \bar\rho =\bar 1$ is trivial, then ${\rm D}_{\bar 1}= \sO \llbracket G^{{\rm ab},\ell} \rrbracket$, where
$G^{{\rm ab},\ell} $ is the abelian, $\ell$-adic \'etale fundamental group of $X$.  In
general, ${\rm D}_{\det\bar\rho}$ is isomorphic to ${\rm D}_{\bar 1}$ by translating with
the Teichm\"uller lift of $\det \bar \rho$,  see~\cite[\S~1.4]{Maz89}.  Thus ${\rm
  D}_{\det\bar\rho}$ is formally smooth, since  $G^{{\rm ab},\ell}$ is torsion free.
\end{proof}

\begin{proof}[Proof of Claim~\ref{claim:triso}]
As the trace map of \'etale sheaves $$\sE nd(\sF)\to \F$$ is surjective and $X$ has
dimension one, the map  $\rm tr$ in~\eqref{eq.obsseq} is surjective as well. So it suffices to show that both $\F$-vector spaces have
dimension one. For $H^2(X,\F)$ this is immediate from Poincar\'e duality.  In order to
apply Poincar\'e duality to  $H^2(X ,\sE
nd(\sF) )$ we recall that the trace pairing induces an isomorphism $\sE nd(\sF)^\vee\cong
\sE nd(\sF) $. So we obtain from duality  an isomorphism   $H^2(X ,\sE
nd(\sF) ) \cong {\rm End}_G(\bar\rho)^\vee = \F$. The  equality comes from the absolute irreducibility of $\bar \rho$ and  Schur's lemma.
\end{proof}

 For an integer $n>0$ we consider the quotient ring $(R_{\bar\rho})_{\Phi^n}=R_{\bar\rho}/I_n $, where $I_n$ is the ideal
generated by $\Phi^n(\alpha)-\alpha$ for all $\alpha\in R_{\bar\rho}$. Based on the presentation
of Lemma~\ref{lem:sm},  we see that $I_n$ is generated by the $b$ elements
\ga{eq.regseqphi}{\Phi^n(t_1)-t_1,
\ldots , \Phi^n(t_b ) -t_b.}
By definition
\ga{}{  \sS_{\bar\rho} \supset  \sA_{\bar \rho}= \bigcup_{n>0} \Spm ( (R_{\bar\rho})_{\Phi^n} \otimes_{\sO} \bar  \Q_\ell  ). \notag}
We  use the following two propositions.
\begin{prop} \label{prop:frob_coinv}
  The ring $(R_{\bar\rho})_{\Phi^n}$ is finite, flat and a complete intersection over
  $\sO$ for any $n>0$.
\end{prop}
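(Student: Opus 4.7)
The strategy is to reduce the three assertions to the single claim that $(R_{\bar\rho})_{\Phi^n}$ is $\sO$-finite, and then to establish that finiteness via de Jong's conjecture.

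By Lemma~\ref{lem:sm}, $R_{\bar\rho}\cong\sO[[t_1,\ldots,t_b]]$ is regular local of Krull dimension $b+1$, and by~\eqref{eq.regseqphi} the ideal $I_n$ is generated by exactly the $b$ elements $\Phi^n(t_i)-t_i$. Krull's Hauptidealsatz gives $\dim(R_{\bar\rho})_{\Phi^n}\ge 1$, while $\sO$-finiteness would force $\dim(R_{\bar\rho})_{\Phi^n}\le 1$. Granting $\sO$-finiteness, the $b$ generators form a system of parameters in the Cohen--Macaulay ring $R_{\bar\rho}$ and are therefore automatically a regular sequence, giving the complete intersection property. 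Flatness over the DVR $\sO$ then follows at once, since an $\sO$-finite Cohen--Macaulay module of dimension one is $\mathfrak{m}_\sO$-torsion-free, hence $\sO$-free.

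All the content is thus in the $\sO$-finiteness of $(R_{\bar\rho})_{\Phi^n}$. By topological Nakayama it is equivalent to
\[
(R_{\bar\rho})_{\Phi^n}\otimes_\sO \F \;=\; \F[[t_1,\ldots,t_b]]/(\bar f_1,\ldots,\bar f_b), \quad \bar f_i:=\bar\Phi^n(t_i)-t_i,
\]
being Artinian, i.e.\ to the formal fixed locus of $\bar\Phi^n$ on $\Spf\F[[t_1,\ldots,t_b]]$ being supported at the closed point. I would argue by contradiction: if a positive-dimensional fixed component existed, pick a non-constant formal arc $\iota\colon\Spf\F[[u]]\hookrightarrow\Spf R_{\bar\rho}$ inside it, and pull back the universal deformation to obtain a continuous representation $\rho_u\colon G\to\GL_r(\F[[u]])$ lifting $\bar\rho$. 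Its $\bar\Phi^n$-invariance, together with the analogue of Remark~\ref{rmk:arithext} in this $\F[[u]]$-valued setting, allows one to extend $\rho_u$ to a continuous representation $\tilde\rho\colon\pi_1^{\rm\acute{e}t}(X_0\otimes_k k',x)\to\GL_r(\F[[u]])$ for some finite extension $k'/k$ with $[k':k]\mid n$.

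The main obstacle is the final step, where de Jong's conjecture enters. Base changing along $\F[[u]]\hookrightarrow\F((u))$ gives a continuous representation into $\GL_r(\F((u)))$ whose residual representation is the absolutely irreducible $\bar\rho$. Invoking~\cite[Conjecture~2.3]{deJ01}, proven in~\cite[\S~1.4]{Gai07} for $\ell>2$, the image must be finite. But any element of $\F((u))$ algebraic over $\F$ lies in $\F$ (substitute a Laurent expansion into a separable minimal polynomial over $\F$: all positive-order coefficients must vanish), so the image already lies in $\GL_r(\F)$. Combined with the residual condition this forces $\tilde\rho\equiv\bar\rho$ to be constant in $u$, so $\iota$ factors through the closed point, contradicting the choice of a non-constant arc. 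This last step is precisely where the hypothesis $\ell>2$ of Theorem~A enters.
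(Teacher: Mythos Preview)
Your overall strategy---reduce everything to $\sO$-finiteness via the regularity of $R_{\bar\rho}$ (Lemma~\ref{lem:sm}), then to Artinianity of the mod-$\ell$ coinvariants via topological Nakayama, and finally derive a contradiction from a non-constant $\Phi^n$-fixed arc using de Jong's conjecture---is precisely the argument the paper invokes by citing~\cite[\S~3.14]{deJ01}. Your commutative-algebra reductions are correct and in fact more explicit than the paper's sketch.

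There is, however, a real gap in your last step. From ``$\rho_u(G)$ is finite'' you deduce ``the image already lies in $\GL_r(\F)$'', on the grounds that the matrix entries are algebraic over $\F$. This implication fails: in characteristic~$\ell$ the matrix $\bigl(\begin{smallmatrix}1&u\\0&1\end{smallmatrix}\bigr)\in\GL_2(\F[[u]])$ has order~$\ell$, yet $u$ is transcendental over~$\F$. Finite order constrains the eigenvalues of a matrix, not its entries. What does follow is weaker but sufficient: since each $\rho_u(g)$ has finite order its eigenvalues are roots of unity, so every coefficient of $\Ch(\rho_u(g))$ lies in $\bar\F_\ell\cap\F'[[u]]=\F'$. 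By Proposition~\ref{prop:reppseudo}(2)--(3) the ring $R_{\bar\rho}=R^P_{\bar\rho}$ is topologically generated over~$\sO$ by such coefficients $\Lambda_j(g_i)$, so the arc $R_{\bar\rho}\to\F'[[u]]$ has image in~$\F'$ and therefore factors through the closed point---the desired contradiction. Equivalently, the traces of $\rho_u$ lie in $\F'$ and coincide with those of~$\bar\rho$; Brauer--Nesbitt over $\F'((u))$ together with the uniqueness (up to homothety) of lattices in a residually absolutely irreducible representation then gives $\rho_u\cong\bar\rho\otimes_\F\F'[[u]]$ as deformations.
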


\begin{prop} \label{prop:red}
The generic fibre $(R_{\bar \rho})_{\Phi^n}\otimes _{\sO}\bar \Q_\ell $ is reduced for
any $n>0$.
\end{prop}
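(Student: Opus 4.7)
The plan is to reduce the reducedness of $A := (R_{\bar\rho})_{\Phi^n}\otimes_\sO \bar\Q_\ell$ to a Jacobian non-vanishing at each closed point, and then verify this via purity of the adjoint local system. By Proposition~\ref{prop:frob_coinv}, $A$ is a zero-dimensional complete intersection over $\bar\Q_\ell$, hence Artinian and Cohen-Macaulay, so $A$ is reduced iff every local ring at a maximal ideal is a field. By Lemma~\ref{lem:sm} one has $R_{\bar\rho}\cong \sO[[t_1,\ldots,t_b]]$, and by~\eqref{eq.regseqphi} the ideal $I_n$ is generated by $f_i := \Phi^n(t_i)-t_i$. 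Formal smoothness implies that at any closed point $x\in \Spm A$ the completion of $R_{\bar\rho}\otimes\bar\Q_\ell$ is a power series ring $\bar\Q_\ell[[s_1,\ldots,s_b]]$ in local coordinates $s_i$ centred at $x$; the $f_i$ complete to power series $g_i\in(s_1,\ldots,s_b)$, and the completed local ring of $A$ at $x$ is $\bar\Q_\ell[[s]]/(g_1,\ldots,g_b)$. By the implicit function theorem, this quotient is a field precisely when $\det(\partial g_i/\partial s_j)_{s=0} = \det(d\Phi^n|_x-I)$ is nonzero in $\bar\Q_\ell$, i.e.\ when $\Phi^n$ acts on the tangent space of $R_{\bar\rho}\otimes\bar\Q_\ell$ at $x$ without eigenvalue one.

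A closed point $x$ corresponds to an absolutely irreducible continuous semi-simple representation $\rho_x\colon G\to \GL_r(\bar\Q_\ell)$ with residual representation $\bar\rho$ whose isomorphism class is fixed by $\Phi^n$. By Remark~\ref{rmk:arithext} $\rho_x$ extends to an (again absolutely irreducible) arithmetic representation $\tilde\rho_x$ of $\pi_1^{\rm \acute{e}t}(X_0\otimes_k k_n, x)$, for $k_n/k$ of degree $n$. The deformation-theoretic argument used in Lemma~\ref{lem:sm} identifies the tangent space of $R_{\bar\rho}\otimes\bar\Q_\ell$ at $x$ with the \'etale cohomology group $H^1(X,\Ad(\rho_x))$, and the derivative $d\Phi^n|_x$ with the geometric Frobenius acting on this group via the arithmetic structure supplied by $\tilde\rho_x$.

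To rule out the eigenvalue one on $H^1$, I would invoke purity: by Lafforgue's theorem, the absolutely irreducible $\tilde\rho_x$ on the smooth curve $X_0\otimes_k k_n$ is pure up to twist, so $\Ad(\tilde\rho_x)$ is pure of weight zero. Deligne's Weil II combined with Poincar\'e duality on the smooth open curve $X_0\otimes_k k_n$ then forces $H^1(X,\Ad(\rho_x))$ to be mixed of weights $\geq 1$; every Frobenius eigenvalue thus has $\ell$-adic absolute value $\geq q^{1/2}>1$ and in particular is not a root of unity, so no power of $\Phi$ has one as an eigenvalue on the tangent space. The main obstacle is this purity input, which rests on the Langlands correspondence for function fields; since Theorem~A already relies on de Jong's conjecture through Gaitsgory's work, one might instead try to extract the required weight bound directly from an arithmetic deformation-theoretic analysis of the completed local ring of the arithmetic deformation space at $\tilde\rho_x$, and thereby dispense with Lafforgue.
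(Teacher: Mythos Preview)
Your argument is essentially the paper's own: both identify the tangent space of $(R_{\bar\rho})_{\Phi^n}\otimes_\sO\bar\Q_\ell$ at a closed point with the $\Phi^n$-invariants of $H^1(X,\Ad(\rho_x))$, extend $\rho_x$ arithmetically via Remark~\ref{rmk:arithext}, and then invoke Lafforgue's purity together with Deligne's weight bounds to show this space vanishes. One small correction: the weight~$\ge 1$ bound on $H^1$ means the Frobenius eigenvalues have \emph{complex} (not $\ell$-adic) absolute value $\ge q^{n/2}$ under every embedding $\bar\Q_\ell\hookrightarrow\C$, and it is this that excludes the eigenvalue~$1$; your closing remark about avoiding Lafforgue is speculative and not part of the paper's proof.
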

 Proposition~\ref{prop:red} is the same as Proposition~5.12 in~\cite{BHKT19}.

\begin{proof}[Sketch of proof of Proposition~\ref{prop:frob_coinv}]
As in \cite[\S~3.14]{deJ01},   we have to show that the images of the elements~\eqref{eq.regseqphi} form a
regular sequence in  $R_{\bar \rho} \otimes_{\sO} \F$. The latter is equivalent to
$(R_{\bar \rho})_\Phi \otimes_{\sO} \F$ being zero-dimensional. This is deduced by
verbatim the same argument as {\it loc. cit.}\ in view of the fact that de Jong's conjecture
is known for $\ell>2$ by \cite{Gai07}.
\end{proof}

\begin{proof}[Proof of Proposition~\ref{prop:red}] 
Consider a continuous representation $\rho \colon G\to \GL_r(\sO')$ corresponding to  a
homomorphism $(R_{\bar \rho})_{\Phi^n}\to \sO' $, where $\sO'$ is a discrete valuation ring  which is a
finite extension of $\sO$. Then up to replacing $k$
by a finite extension,
$\rho$ can be extended to a continuous representation
$\rho_0\colon\pi_1^{\rm \acute{e}t}(X_0, x)\to \GL_r(\sO')$, see Remark~\ref{rmk:arithext}.
 As $\bar \rho$ is absolutely irreducible,
$\rho_0\otimes_{\sO'} \bar \Q_\ell$ is irreducible. After a suitable twist we can assume
without loss of generality that
$\det(\rho_0)$ is finite, see~\cite[Proposition~1.3.4]{Del80}.
Then by the Langlands correspondence~\cite[Theorem~VII.6]{Laf02} the lisse sheaf $\sF_0$
corresponding to $\rho_0$ is pure of weight zero.
The tangent space to $\rho$ in $(R_{\bar \rho})_\Phi \otimes_{\sO}\bar\Q_\ell$ is given by
\ga{}{ H^1\left(G, {\rm Ad}_\rho [\frac{1}{\ell}]\right) ^\Phi=H^1\left(X, \sE nd(\sF)[\frac{1}{\ell}]\right)^\Phi=0,\notag}
where the last equality follows from the fact that $H^1(X, \sE nd(\sF
  ))$ has weight one as  $  \sE nd(\sF_0)$ has weight zero. This finishes the proof.
\end{proof}

\begin{proof}[Proof of Theorem A]
  We have to show that an element  $\alpha\in R_{\bar \rho}\otimes_{\sO} \bar\Q_\ell$
  which vanishes on the points $\sA_{\bar\rho}$ is zero.
After replacing $\sO$ by a finite extension we may assume without loss of generality that
$\alpha\in R_{\bar\rho}$.
The vanishing condition means that $\alpha$ is contained in all the maximal
 ideals
corresponding to the points of $\sA_{\bar\rho}$, {\it i.e.}\ that the image of $\alpha$ in the ring
$(R_{\bar\rho})_{\Phi^n}\otimes_{\sO} \bar
\Q_\ell$ is contained in its nilpotent radical for all $n>0$.
As $(R_{\bar\rho})_{\Phi^n}\otimes_{\sO} \bar
\Q_\ell$ is reduced by Proposition~\ref{prop:red}, this means that the image of $\alpha$
vanishes in $(R_{\bar\rho})_{\Phi^n}\otimes_{\sO} \bar
\Q_\ell$ for all $n>0$. By the flatness in Proposition~\ref{prop:frob_coinv} it actually vanishes in
$(R_{\bar\rho})_{\Phi^n}$ for any $n>0$.
By Claim~\ref{claim:intvan} this implies that  $\alpha=0$.
\end{proof}

\begin{claim}\label{claim:intvan}
The canonical map $R_{\bar\rho}\to \lim_n(R_{\bar\rho})_{\Phi^n} $ is injective.
\end{claim}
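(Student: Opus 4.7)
The plan is to show that the kernel of the canonical map, i.e.\ the ideal $\bigcap_{n>0} I_n$ with $I_n = \ker(R_{\bar\rho}\twoheadrightarrow (R_{\bar\rho})_{\Phi^n})$, is zero. By Lemma~\ref{lem:sm} the ring $R_{\bar\rho}$ is isomorphic to $\sO[[t_1,\ldots,t_b]]$; in particular it is a Noetherian complete local ring whose maximal ideal $\mathfrak{m}$ satisfies Krull's intersection theorem $\bigcap_{N\geq 1}\mathfrak{m}^N = 0$. Consequently, it is enough to produce for every integer $N\geq 1$ some $n=n(N)>0$ with $I_n\subseteq\mathfrak{m}^N$.

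The crucial observation is that the residue field $\F$ is finite, which forces each truncation $R_{\bar\rho}/\mathfrak{m}^N$ to be a finite ring: each graded piece $\mathfrak{m}^k/\mathfrak{m}^{k+1}$ for $k<N$ is a finitely generated $\F$-module, hence finite, so $R_{\bar\rho}/\mathfrak{m}^N$ is a finite set. The continuous $\sO$-algebra automorphism $\Phi$ preserves $\mathfrak{m}$ and therefore descends to a ring automorphism of $R_{\bar\rho}/\mathfrak{m}^N$. Since the automorphism group of a finite ring is finite, this descended automorphism has some finite order $M_N\geq 1$, meaning $\Phi^{M_N}(\alpha)\equiv \alpha\pmod{\mathfrak{m}^N}$ for every $\alpha\in R_{\bar\rho}$. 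In view of the very definition of $I_{M_N}$ as the ideal generated by $\{\Phi^{M_N}(\alpha)-\alpha:\alpha\in R_{\bar\rho}\}$, this gives $I_{M_N}\subseteq\mathfrak{m}^N$, so one may take $n(N)=M_N$.

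Combining the two steps: any $\alpha\in\bigcap_n I_n$ lies in $I_{M_N}\subseteq\mathfrak{m}^N$ for every $N$, hence in $\bigcap_N\mathfrak{m}^N=0$, so $\alpha=0$. I do not expect a genuine obstacle in this argument: it is simply the combination of Krull's intersection theorem with the remark that a continuous Frobenius action on a formally smooth deformation ring with finite residue field has finite order modulo every power of the maximal ideal. The Claim is essentially a bookkeeping device that allows the proof of Theorem~A to lift the vanishing of $\alpha$ in each Frobenius-coinvariant quotient $(R_{\bar\rho})_{\Phi^n}$ (established earlier via reducedness and flatness) to a vanishing in $R_{\bar\rho}$ itself.
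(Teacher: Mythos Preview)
Your argument is correct and is essentially the same as the paper's: both observe that $R_{\bar\rho}/\mathfrak{m}^N$ is a finite ring (so $\Phi$ has finite order on it, giving $I_n\subset\mathfrak{m}^N$ for suitable $n$) and then conclude via $\bigcap_N\mathfrak{m}^N=0$. The only difference is that you spell out the details and cite Lemma~\ref{lem:sm}, whereas the paper needs only that $R_{\bar\rho}$ is complete local Noetherian with finite residue field (Proposition~\ref{prop:reppseudo}), not its formal smoothness.
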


\begin{proof}
Let $\mathfrak m$ be the maximal ideal of $R_{\bar\rho}$.
For any integer $m >0$, there is an integer $n>0$ such that $\Phi^n$  acts trivially on  $R_{\bar
  \rho}/\mathfrak m^m$ as the latter ring is finite, so $I_n\subset \mathfrak{m}^m$.  Thus
$\cap_{n>0} I_n \subset \cap_{m>0} \mathfrak{m}^m = \{0  \}$.
  \end{proof}

\section{Some Applications}\label{sec:applications}
In this section we make two remarks concerning applications.

\subsection{ The Hard Lefschetz theorem in positive characteristic.} \label{ss:HL}

This application of our Strong Conjecture is motivated by~\cite{Dri}.
Let $f\colon X \to Y$ be a projective morphism of separated schemes of finite type
over an algebraically closed field $\bar k$. Let $\eta\in H^2(X,\Q_\ell)$ be the Chern
class of a relative ample line bundle. Here we omit Tate twists for simplicity of
notation.

One conjectures (see \cite[Remark~1.4]{EK19}) that if  $\sF \in D^b_c(X, \bar \Q_\ell)$ is a
semi-simple perverse sheaf, then the Hard Lefschetz property holds, {\it i.e.}\
 the cup-product
\ga{app.hleq}{\cup\eta^i\colon {}^p H^{-i}f_* \sF \to {}^p  H^{i} f_* \sF }
is an isomorphism for all $i\ge 0$.
It is known that this holds if
\begin{itemize}
\item[(i)] $\sF$ is of geometric origin in the sense of~\cite[\S~6.2.4--6.2.5]{BBD82}
  \item[(ii)] $\bar k$ is the algebraic closure of a finite field $k$ and $f$, $\eta$ and $\sF$
    descend to schemes $X_0,Y_0$ over the field~$k$.
  \end{itemize}

  For part~(ii) one combines~\cite[Theorem~6.2.10]{BBD82} and the Langlands correspondence of
  Drinfeld--Lafforgue~\cite[Theorem~VII.6]{Laf02}. By \cite[Theorem~1.1]{EK19} the Hard Lefschetz property is also known 
  if $\sF$ is a rank $1$ $\bar \Q_\ell$-local system $\sL$, and more generally if it is a twist of such an $\sL$  by a sheaf as in (i) (see Theorem~5.4 in {\it loc.~cit.}).

\begin{prop} \label{prop:conjHL}
If the irreducible constituents of $\sF$ have generic rank at most $r$ and the Strong
Conjecture~\ref{conj:strong} holds for any   representation of degree
$\le r$ then the map~\eqref{app.hleq} is an isomorphism.
\end{prop}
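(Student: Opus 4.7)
The plan is to reduce Hard Lefschetz for $\sF$ to a Zariski-closed, $\Phi$-equivariant condition on a space of the form $\sS_{\bar\rho}$, then apply the Strong Conjecture together with the known case~(ii) to conclude. Since Hard Lefschetz is stable under direct sums and $\sF$ is semi-simple, I may assume $\sF$ is simple, hence of the form $i_* j_{!*}(\sL[\dim V])$ where $i\colon \bar V\hookrightarrow X$ is the closed immersion of the support of $\sF$, $j\colon V\hookrightarrow \bar V$ is a smooth dense open, and $\sL$ is an irreducible $\bar\Q_\ell$-local system on $V$ of rank $r'\le r$.

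After spreading out $(X,Y,f,\eta,\bar V, V)$ over a finitely generated subring of $\bar k$ and specializing at a closed point, I obtain models $X_0,Y_0,f_0,\eta_0,\bar V_0,V_0$ over a finite field $k=\F_q$ with $X=X_0\otimes_k\bar k$, etc. Enlarging $k$ if necessary, the residual representation $\bar\rho\colon \pi_1^{\rm \acute{e}t}(V,v)\to\GL_{r'}(\F)$ of $\sL$ is isomorphic to its $\Phi$-twist, so the Zariski space $\sS_{\bar\rho}$ and its $\Phi$-action from Section~\ref{sec:conj} are defined, and $\sL$ defines a point $[\sL]\in\sS_{\bar\rho}$. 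To each $\rho\in\sS_{\bar\rho}$ one associates an irreducible semi-simple local system $\sL_\rho$ on $V$ and the simple perverse sheaf $\sF_\rho:= i_* j_{!*}(\sL_\rho[\dim V])$ on $X$. Using Mazur-style framings over an \'etale cover of $\Spm(R^P_{\bar\rho}\otimes_\sO\bar\Q_\ell)$, these sheaves, their pushforwards $f_*\sF_\rho$, the perverse truncations ${}^pH^{\pm i}f_*\sF_\rho$, and the Lefschetz maps $\cup\eta^i$ fit into a coherent family. The subset $Z\subseteq\sS_{\bar\rho}$ on which $\cup\eta^i$ fails to be an isomorphism for some $i\ge 0$ is then Zariski closed, since it is cut out by the vanishing of the determinants of these Lefschetz maps; and $Z$ is $\Phi$-invariant by the functoriality of the whole construction with respect to the Frobenius.

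By the Strong Conjecture applied to the closed $\Phi$-invariant set $Z$ and the degree $r'\le r$, the arithmetic points $Z\cap\sA_{\bar\rho}$ are dense in $Z$. For such an arithmetic $\rho$, the local system $\sL_\rho$ descends to $V_0\otimes_k k'$ for some finite extension $k'/k$, so $\sF_\rho$ descends to a semi-simple perverse sheaf on $X_0\otimes_k k'$ with the descended relative ample class $\eta_0$. Case~(ii) above, i.e.~\cite[Theorem~6.2.10]{BBD82} combined with the Drinfeld--Lafforgue Langlands correspondence~\cite[Theorem~VII.6]{Laf02}, then yields Hard Lefschetz for $\sF_\rho$, so $\rho\notin Z$. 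Hence $Z\cap\sA_{\bar\rho}=\emptyset$, which forces $Z=\emptyset$ by density, and in particular $[\sL]\notin Z$, proving Hard Lefschetz for $\sF$. The main obstacle is the algebraicity step in the middle paragraph: realizing the assignment $\rho\mapsto\bigl(\cup\eta^i$ on ${}^pH^{-i}f_*\sF_\rho\bigr)$ as a genuine morphism of coherent sheaves in a family over $\Spm(R^P_{\bar\rho}\otimes_\sO\bar\Q_\ell)$, which requires a universal local system (working after the framing covers available when $\bar\rho$ is not absolutely irreducible), a universal IC extension, and the verification that $f_*$ and perverse truncation commute with base change along this family of sheaves supported on a fixed $\bar V$.
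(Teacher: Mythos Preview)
Your overall strategy coincides with the paper's sketch: reduce to $\sF$ simple, spread out so that $f$ and $\eta$ descend to a finite field, vary the local system in $\sS_{\bar\rho}$, and combine the Strong Conjecture with case~(ii) to force the failure locus to be empty. The paper likewise flags the family construction (putting the $\sF_\rho$, their pushforwards and perverse truncations into an honest family over the deformation space) as the step requiring theory not yet available in the literature, so your self-identified ``main obstacle'' is exactly the one the paper leaves open.

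The one substantive divergence is your claim that the failure locus $Z$ is Zariski \emph{closed}. The paper only asserts that this locus, call it $Z^\circ$, is \emph{constructible} (by analogy with~\cite[Corollary~4.3]{EK19}). Your justification---vanishing of the determinants of the Lefschetz maps---tacitly assumes that ${}^pH^{\pm i}f_*\sF_\rho$ form vector bundles over the parameter space, but the intermediate extension $j_{!*}$ and the perverse truncations can jump as $\rho$ varies, so one should not expect more than constructibility. The paper handles this by applying the Strong Conjecture to the Zariski closure $Z$ of $Z^\circ$: since a nonempty constructible set contains a nonempty open of its closure, and arithmetic points are dense in $Z$, one lands an arithmetic point inside $Z^\circ$ itself, which contradicts case~(ii). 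You should weaken your claim to constructibility and insert this closure-then-density step; with that change your argument matches the paper's. (A minor point: for general $\rho\in\sS_{\bar\rho}$ the local system $\sL_\rho$ is semi-simple but need not be irreducible, so $\sF_\rho$ is semi-simple rather than simple---this does not affect the argument since case~(ii) applies to semi-simple perverse sheaves.)
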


\begin{proof}[Sketch of proof]
  Similar to~\cite[Lemma~6.1.9]{BBD82} one uses a spreading argument in order to reduce to the case
  in which $\bar k$ is the algebraic closure of a finite field $k_0$ and $f$ and $\eta$
  are defined over  $k_0$. Then $\sF$ corresponds to an irreducible representation
  $\rho_{\sF}\colon \pi_1^{\rm \acute{e}t}(U)\to \GL_r(\bar \Q_\ell )$, where $U\subset X$ is a smooth locally closed geometrically irreducible subvariety
   (over which $\sF$ is a shifted  smooth sheaf).

  Let
  $\bar\rho\colon \pi_1^{\rm \acute{e}t}(U)\to \GL_r(\F)$ be the semi-simple reduction of $\rho_\sF$.
  In fact
  each representation $\rho\in \sS_{\bar\rho}$ gives rise via the
  intermediate extension of the associated  smooth sheaf to a $\bar\Q_\ell$-perverse sheaf
  $\sF_\rho$ on $X$.

  Similarly  to~\cite[Corollary~4.3]{EK19} 
  one shows
  \begin{claim}
The subset $Z^\circ \subset \sS_{\bar\rho}$ of those $\rho$ for which the Hard
Lefschetz property for the perverse sheaf $\sF_{\rho}$ fails to hold, is constructible.
\end{claim}

In order to give a complete proof of the claim one would need a theory of perverse
\'etale adic sheaves over fields more general than $\bar\Q_\ell$. Unfortunately, such a
theory does not exist in the literature at the moment, but should be rather formal in
terms of the pro-\'etale topology.

As $Z^\circ$ is also stabilized by the Frobenius $\Phi$, we can apply the Strong
Conjecture to the Zariski closure $Z$ of $Z^\circ$. This implies that $Z^\circ$ contains
an arithmetic point, which contradicts (ii) above.
\end{proof}

\begin{rmk}
Using Proposition~\ref{prop:lefbelyi}, it would be enough in Proposition~\ref{prop:conjHL} to prove the Strong Conjecture in rank $\le r$ on all curves or in any rank on $\P^1\setminus \{0,1,\infty\}$  for a tame $\bar \rho$. 
\end{rmk}

\subsection{Our proof of Theorem~B on $X_0=\P^1\setminus \{0,1,\infty\}$ for $\bar \rho$ tame and $r=2$ implies de Jong's conjecture~\cite[Conjecture~2.3]{deJ01}  in this case} \label{ss:dJ}
Let $\rho_0\colon \pi_1(X_0,x)\to {\GL}_2(\F[[t]])$ be an arithmetic representation and $\rho$ be its restriction to $G$.
By \cite[Proposition~2.4 and Lemma~2.10]{deJ01} we may assume that $\rho\otimes_{\F[[t]]} \F((t)) $ is absolutely  irreducible.
  By the proof of Theorem~B in Section~\ref{sec:thmB}, the $\Phi$-invariant point ${\rm
    Det}(\rho) \in {\rm PD}_{\bar \rho}(\F[[t]])$
  has a  $[q]$-invariant image in $  {\rm D}_{\bar \rho(\underline{g})} (\F[[t]])$. Thus  it lies in
 $  {\rm D}_{\bar \rho(\underline{g})} (\F') \subset  {\rm D}_{\bar \rho(\underline{g})} (\F'[[t]]) $ for a finite extension $\F'\supset \F$. Thus
 $\rho \otimes_{\F[[t]]} \overline{\F((t))}$  comes from a continuous representation
 $G\to {\GL}_2(\F'')$ for a finite extension $\F''\supset \F'$ (\cite[Proposition~2.2]{Bas80}) and thus has finite monodromy.
 This finishes the proof.\qed

\vskip\baselineskip
 We observe that our proof    avoids the use of  the geometric Langlands correspondence,
 which is used in~\cite[Theorem~1.2]{deJ01} to establish the degree two case of de Jong's conjecture.



\begin{thebibliography}{BHKT19+++}
\bibitem[Art73]{Art73} M. Artin, \emph{Th\'eor\`eme de finitude pour un morphisme propre; dimension cohomologique des sch\'emas alg\'ebriques affines}. In: SGA 4, tome 3, Exp. No. XIV, pp. 145--168, Lecture Notes in Mathematics, vol. 305, Springer-Verlag, 1973.
 \bibitem[Bas80]{Bas80} H. Bass, \emph{Groups of integral representation type}, Pacific J. Math. {\bf 86} (1980), no.~1, 15--51.
\bibitem[BBD82]{BBD82} A. Beilinson, J. Bernstein, and P. Deligne, \emph{Faisceaux pervers}. In:  \emph{Analyse et topologie sur les espaces singuliers, I} (Luminy, 1981), pp. 5--171, Ast\'erisque, vol. 100, Soci\'et\'e Math\'ematique de France, Paris, 1982.
\bibitem[BHKT19]{BHKT19} G. B\"ockle, M. Harris, C. Khare, and J. Thorne,
\emph{$\hat G$-local systems on smooth projective curves are potentially automorphic},
Acta Math. {\bf 223} (2019), no. 1, 1--111.
\bibitem[BK06]{BK} G. B\"ockle and C. Khare,
\emph{Mod $\ell$ representations of arithmetic fundamental groups. II. A conjecture of A. J. de Jong},
 Compos. Math. {\bf 142} (2006), no. 2, 271--294.
\bibitem[Bou98]{Bou89} N. Bourbaki, \emph{Commutative Algebra, Chapters 1--7},  Elements of Mathematics, Springer-Verlag, Berlin, 1998.
 \bibitem[Che14]{Che14} G. Chenevier, \emph{The $p$-adic analytic space of pseudocharacters of a profinite group and pseudorepresentations over arbitrary rings}. In: \emph{Automorphic forms and Galois representations} (Proceedings of the 94th London Mathematical Society (LMS) - EPSRC Durham symposium, Durham, UK, July 18-28, 2011), vol. 1, pp. 221--285, London Mathematical Society, Lecture Note Series, vol. 414, Cambridge University Press, Cambridge, 2014.
 \bibitem[deJ01]{deJ01} J. de Jong, \emph{A conjecture on the arithmetic fundamental group}, Israel J. of Math. {\bf 121} (2001), 61--84.
\bibitem[Del73]{Del73} P. Deligne, \emph{Comparaison avec la th\'eorie transcendente}. In: SGA 7 II, Exp. No. XIV, pp.~116--164, Lecture Notes in Mathematics, vol. 340, 1973.
 \bibitem[Del80]{Del80} \bysame, \emph{La conjecture de Weil II}, 
 Publ. Math., Inst. Hautes \'Etud. Sci.
 {\bf 42} (1980), 137--252.
\bibitem[Dri01]{Dri} V. Drinfeld, \emph{On a conjecture of Kashiwara},
Math. Res. Lett. {\bf 8} (2001), no. 5-6, 713--728.
 \bibitem[EK21]{EK19} H. Esnault and M. Kerz, \emph{\'Etale cohomology of rank one $\ell$-adic local systems in positive characteristic}, Selecta Math. (N.S.) {\bf 27} (2021), No. 4, Paper No. 58, 25p.
  \bibitem[GL96]{GL96} O. Gabber and F. Loeser, \emph{Faisceaux pervers $\ell$-adiques sur un tore}, Duke Math. J. {\bf 83} (1996), no. 3, 1--106.
 \bibitem[Gai07]{Gai07} D. Gaitsgory, \emph{On de Jong's conjecture}, Israel J. of Math. {\bf 157} (2007), 155--191.
 \bibitem[Gro71]{Gro71} A. Grothendieck, \emph{Rev\^etements \'etales et groupe fondamental}. In: \emph{S\'eminaire de g\'eom\'etrie alg\'ebrique du Bois Marie 1960-61} (SGA 1), Lecture Notes in Mathematics, vol. 224, Springer-Verlag, 1971.
\bibitem[Jou83]{Jou} J.-P. Jouanolou, \emph{Th\'eor\`emes de Bertini et applications},
Progress in Mathematics, vol. 42,  Birkh\"auser Boston, Inc., Boston, MA, 1983.
\bibitem[Kat96]{Kat96} N. Katz, \emph{Rigid Local Systems}, 
Annals of Mathematics Studies, vol. 139, Princeton Univ. Press, Princeton, NJ, 1996.
 \bibitem[Laf02]{Laf02} L. Lafforgue, \emph{Chtoucas de Drinfeld et correspondance de Langlands}, Invent. math. {\bf 147} (2002), no. 1,
 1--241.
 \bibitem[Lit21]{Lit19} D. Litt, \emph{Arithmetic representations of fundamental groups II: finiteness},
 Duke Math. J. {\bf 170} (2021), no.~8, 1851--1897.
 \bibitem[Maz89]{Maz89} B. Mazur, \emph{Deforming Galois Representations}. In:
 \emph{Galois groups over $\mathbb{Q}$} (Proc. Workshop, Berkeley/CA (USA), 1987), pp. 385--437, 
 Mathematical Sciences Research Institute Publications, vol. 16, Springer-Verlag, New York, 1989.
\bibitem[Sai97]{Sai97} M. Saidi,  \emph{Rev\^etements mod\'er\'es et groupe fondamental de graphe de groupes},  Compos.  Math  {\bf 107}  (1997), 319--338.
\bibitem[SY20]{SY20} Y. Sugiyama and S. Yasuda, \emph{Belyi's theorem  in characteristic two},  Compos. Math. {\bf 156} (2020),  325--339.
 \bibitem[Til96]{Til96}  J. Tilouine, \emph{Deformations of Galois Representations and Hecke Algebras}, Mehta Research Institute of Mathematics and Mathematical Physics, Allahabad, Narosa Publishing House, New Delhi, 1996.
 \end{thebibliography}
\end{document}